\def\comment#1{}
\newtheorem{theorem}{Theorem}
\newtheorem{lemma}[theorem]{Lemma}
\theoremstyle{remark}
 \renewcommand{\phi}{\varphi}
\newcommand{\E}{\mathbb{E}}
\newcommand{\Q}{\mathbb{Q}}
\newcommand{\R}{\mathbb{R}}
\newcommand{\bes}{\begin{subequations}}
\newcommand{\ees}{\end{subequations}}
\newcommand{\eea}{\end{eqnarray}}
\newcommand{\NN}{{\mathbb N}}
\newcommand{\EE}{{\mathbb E}}
\newcommand{\WW}{{\mathbb W}}
\renewcommand{\epsilon}{\varepsilon}
\newcommand{\fourIdx}[5]{%
\setbox1=\hbox{\ensuremath{^{#1}}}%
 \setbox2=\hbox{\ensuremath{_{#2}}}%
 \setbox5=\hbox{\ensuremath{#5}}%
 \hspace{\ifnum\wd1>\wd2\wd1\else\wd2\fi}%
 \ensuremath{\copy5^{\hspace{-\wd1}\hspace{-\wd5}#1\hspace{\wd5}#3}%
 _{\hspace{-\wd2}\hspace{-\wd5}#2\hspace{\wd5}#4}%
 }}
\numberwithin{equation}{section}
\numberwithin{theorem}{section}
\newcommand{\mo}{\bar{M}}
\newcommand{\vo}{\bar{v}}
\newcommand{\so}{\bar{\sigma}}
\renewcommand{\mathrm}{}
\newcommand{\mylabel}[2]{#2\def\@currentlabel{#2}\label{#1}}
\begin{document}

\title[The most exciting game]{The most exciting game}

\author{Julio Backhoff-Veraguas and Mathias Beiglb\"ock}\thanks{We thank David Aldous for pointing us to the problem posed in  \cite{Al22a, Al22b} and suggesting the connection to specific relative entropy. We also acknowledge support by the Austrian Science Fund FWF through projects P36835, Y0782, and P35197 }

\begin{abstract} Motivated by a problem posed by Aldous \cite{Al22a, Al22b} our goal is to find the maximal-entropy win-martingale: 

In a sports game between two teams, the chance the home team wins is initially $x_0 \in (0,1)$
 and finally 
0 or 1. As an idealization we take a continuous time interval $[0,1]$ and consider the process $M=(M_t)_{t\in [0,1]}$
 giving the probability at time 
$t$ that the home team wins.  This is a martingale which we idealize further to have continuous paths. 
We consider the problem to find the most random martingale $M$ of this type, where `most random' is interpreted as a maximal entropy criterion.  We observe that this max-entropy win-martingale $M$ also minimizes specific relative entropy with respect to Brownian motion in the sense of Gantert  \cite{Ga91} and use this to prove that $M$ is characterized by the stochastic differential equation
$$ dM_t = \frac{\sin (\pi M_t )} {\pi\sqrt {1-t}}\, dB_t.$$
To derive the form of the optimizer we use a scaling argument together with a new first order condition for martingale optimal transport which may be of interest in its own right. 
\end{abstract}
\keywords{Entropy, specific relative entropy,  prediction markets, max-entropy win-martingale, Martingale optimal transport}

\maketitle

\section{Introduction}

\subsection{Main result}
We  write  $\mathcal M^c_{x_0}$ for the set of laws of continuous martingales  with time-index set $[0,1]$ which  have  absolutely continuous quadratic variation and start in $x_0$. The subset $\mathcal M^c_{x_0, \text{win}}$ of  \emph{win-martingales}   consist of those martingales which  terminate in either $0$ or $1$. Win-martingales appear naturally as models for prediction markets (cf.\ \cite{Al13}). 

In our main result we characterize the win-martingale which is closest to Brownian motion in that it minimizes the \emph{specific relative entropy $h$}     (in the sense of Gantert \cite{Ga91}) w.r.t.\ Wiener measure $\mathbb{W}^{x_0}$ started at $x_0$. 
\begin{theorem}
Let $x_0\in (0,1)$. Then the minimization problem
\begin{align}\inf\{h(\mathbb{Q}| \mathbb{W}^{x_0}): \Q\in \mathcal M^c_{x_0, \text{win}} \} \label{eq:max_ent_prob}
\end{align}
has a unique solution which is given through the SDE
\begin{align}\label{eq:AldousMart} dM_t = \frac{\sin (\pi M_t )} {\pi\sqrt {1-t}}\, dB_t, \quad M_0=x_0.\end{align}
\end{theorem}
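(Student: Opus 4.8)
The plan is to convert the entropic problem into an explicit one‑dimensional variational problem, solve it, and verify optimality by an It\^o comparison. For $\mathbb{Q}\in\mathcal M^c_{x_0}$ write $d\langle M\rangle_t=a_t\,dt$. The first ingredient is the lower bound $h(\mathbb{Q}\mid\mathbb{W}^{x_0})\ge \tfrac12\,\mathbb{E}^{\mathbb{Q}}[\int_0^1(a_t-1-\log a_t)\,dt]$, with equality for sufficiently regular (in particular Markovian) $\mathbb{Q}$ --- the bridge between Gantert's quantity and stochastic control, which I would establish or cite. For a win‑martingale the middle term is a \emph{constant}: $\mathbb{E}^{\mathbb{Q}}[\int_0^1 a_t\,dt]=\mathbb{E}^{\mathbb{Q}}[\langle M\rangle_1]=\mathbb{E}^{\mathbb{Q}}[M_1^2]-x_0^2=x_0(1-x_0)$, using $M_1\in\{0,1\}$ and the martingale property (competitors with $\mathbb{E}[\langle M\rangle_1]=\infty$ have $h=+\infty$ and are irrelevant). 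Hence \eqref{eq:max_ent_prob} is equivalent to the \emph{entropy maximisation} $v(0,x_0):=\sup\{\mathbb{E}^{\mathbb{Q}}[\int_0^1\log a_t\,dt]:\mathbb{Q}\in\mathcal M^c_{x_0,\text{win}}\}$, which is precisely the ``most random win‑martingale'' reading of Aldous's question.

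Let $v(t,x)$ be the analogous supremum over continuous martingales on $[t,1]$ started at $x$ and absorbed in $\{0,1\}$ at time $1$. Brownian scaling along the affine time change $u\mapsto t+u(1-t)$ multiplies the diffusion density by $(1-t)$ and, tracking the effect on $\int\log a$, yields the self‑similarity identity $v(t,x)=(1-t)\,\phi(x)-(1-t)\log(1-t)$ with $\phi:=v(0,\cdot)$, reducing everything to the single concave profile $\phi\colon[0,1]\to[-\infty,\infty)$, symmetric about $\tfrac12$, with $\phi(0^+)=\phi(1^-)=-\infty$. Formally $v$ solves the HJB equation $\partial_t v+\sup_{a>0}\big(\log a+\tfrac12 a\,\partial_{xx}v\big)=0$, whose inner first‑order condition $1/a+\tfrac12\partial_{xx}v=0$ gives the feedback $a^\ast(t,x)=-2/\partial_{xx}v(t,x)$ and reduces the equation to $\partial_t v+\log(-2/\partial_{xx}v)-1=0$; this is where I would invoke the new first‑order condition for martingale optimal transport as the rigorous substitute forcing an optimiser to realise this pointwise maximisation without a priori regularity of competitors. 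Substituting the scaling ansatz turns the HJB equation into the autonomous ODE $\phi''=-2\,e^{-\phi}$ on $(0,1)$ with $\phi(0^+)=\phi(1^-)=-\infty$; multiplying by $\phi'$ and using $\phi'(\tfrac12)=0$ integrates it, the unique such solution being $\phi(x)=2\log\big(\sin(\pi x)/\pi\big)$. Then $\partial_{xx}v(t,x)=-2\pi^2/\big((1-t)\sin^2(\pi x)\big)$ and $a^\ast(t,x)=-2/\partial_{xx}v(t,x)=\sin^2(\pi x)/\big(\pi^2(1-t)\big)$, which is exactly the diffusion coefficient in \eqref{eq:AldousMart}.

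It then remains to (i) show \eqref{eq:AldousMart} has a solution, unique in law, that is genuinely a win‑martingale --- the coefficient vanishes at $\{0,1\}$ and blows up as $t\uparrow1$, and one checks that $M_t$ is absorbed in $\{0,1\}$ at time $1$ almost surely; and (ii) run the verification: for arbitrary $\mathbb{Q}\in\mathcal M^c_{x_0,\text{win}}$ apply It\^o's formula to $v(t,M_t)$, use the HJB inequality $\partial_t v+\log a_t+\tfrac12 a_t\,\partial_{xx}v\le 0$ (equivalently the concavity of $\log$ with slope $1/a^\ast$) together with the terminal behaviour $v(1^-,M_1)=0$ along win‑paths to obtain $\mathbb{E}^{\mathbb{Q}}[\int_0^1\log a_t\,dt]\le\phi(x_0)$, with equality iff $a_t=a^\ast(t,M_t)$ $dt\otimes d\mathbb{Q}$‑a.e. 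Combined with the reduction of the first paragraph this shows \eqref{eq:AldousMart} is the unique minimiser of \eqref{eq:max_ent_prob}, with value $\tfrac12\big(x_0(1-x_0)-1\big)-\log\big(\sin(\pi x_0)/\pi\big)$.

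I expect the genuinely delicate work to sit in step (ii): controlling the behaviour at $t=1$ --- both that the candidate SDE truly terminates in $\{0,1\}$, and that the boundary contributions in It\^o's formula for an arbitrary competitor vanish, since $v$ and $\partial_x v$ degenerate at the corners $(1,0),(1,1)$ while $\partial_{xx}v$ blows up along $\{t=1\}$ --- and, in the second paragraph, in upgrading the formal HJB/first‑order reasoning to an argument valid for all win‑martingales simultaneously, which is the role of the new martingale‑optimal‑transport first‑order condition. The scaling identity and the ODE integration are, by comparison, essentially mechanical once the value function is in hand.
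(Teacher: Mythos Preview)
Your outline is correct and lands on the same candidate and the same verification architecture as the paper, but the \emph{derivation of the candidate} is organised differently. You first observe that $\mathbb{E}[\int_0^1 a_t\,dt]=x_0(1-x_0)$ is constant over win-martingales, reduce to maximising $\mathbb{E}[\int_0^1\log a_t\,dt]$, write the HJB for the value function, impose the scaling $v(t,x)=(1-t)\phi(x)-(1-t)\log(1-t)$, and obtain the ODE $\phi''=-2e^{-\phi}$ for $\phi$. The paper instead perturbs an optimiser by time changes to derive a first-order condition (its Lemma~4.1): $t\mapsto\log\Sigma_t$ must be a $\mathbb{Q}$-martingale; combined with the same scaling ansatz on the \emph{volatility} side this gives the ODE $\sigma\sigma''-(\sigma')^2=-1$ for $\sigma(x)$. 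The two ODEs are dual (yours for the value function, theirs for the diffusion coefficient) and both are solved by $\sin(\pi x)/\pi$. Your route is arguably more direct for this particular cost; the paper's route isolates a transferable MOT first-order condition. One small confusion: you invoke the MOT first-order condition as the ``rigorous substitute'' in the HJB step, but in the paper that lemma is purely heuristic, used only to guess the candidate; the proof proper is pure verification.

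On verification you correctly identify the sore point: with $v(t,x)=(1-t)\phi(x)-(1-t)\log(1-t)$ and $\phi(0^+)=\phi(1^-)=-\infty$, the claim $v(1^-,M_1)=0$ along win paths is a $0\cdot(-\infty)$ statement, and $\partial_x v$ blows up near the corners, so It\^o's formula for a generic competitor needs care. The paper does not push the It\^o argument to $t=1$ directly; instead it (i) stops $M$ at $t^\epsilon=\min\{1-\epsilon,\tau^\epsilon\}$ (first exit from $[\epsilon,1-\epsilon]$), (ii) concatenates with the Aldous martingale thereafter to get an admissible $M^\epsilon$ satisfying the hypotheses of the verification lemma, and (iii) shows $\mathbb{E}[\bar v(t^\epsilon,M_{t^\epsilon})]\to 0$ using the Jensen lower bound $\tilde v\le v$ and the uniform comparison $|\,2\tilde v-\bar v\,|\le C(1-s)+\tfrac12|x-x^2-1+s+(1-s)\log(1-s)|$. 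This localisation-plus-pasting device is the concrete mechanism behind what you flag as ``controlling the behaviour at $t=1$''; your sketch is compatible with it but does not yet supply it.
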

Gantert \cite{Ga91} defines specific relative entropy $h$ as the scaled limit of relative entropies associated to appropriate time-discretizations of $\Q$ and $\mathbb{W}^{x_0}$. For sufficiently regular $\Q$, it is known (cf.\ \cite{Ga91, BaUn22}) that  
\begin{align}\label{eq:specific_entropy} \textstyle h(\mathbb{Q}| \mathbb{W}^{x_0})= \frac{1}{2}\E_{\Q}\left[ \int_0^1\{\Sigma_t -\log(\Sigma_t) -1\}\,  dt \right ],\end{align}
where $\Sigma_t$ stands for the density of the quadratic variation for the canonical process under $\Q$  at time $t$ (see \cite{Ga91, Fo22a}). 
For us it is important  to directly use the quantity in \eqref{eq:specific_entropy}, we discuss this in Section 3 below.

Aldous \cite{Al22a, Al22b} considers the problem of determining the win martingale which maximizes entropy. To make this precise  Aldous considers classical Shannon entropy and win martingales in a discretized setting. He provides heuristics for the existence of a scaling limit and  suggests a PDE that should be satisfied by this limit. In this paper we observe that (discrete time)  martingales which  \emph{maximize} Shannon entropy \emph{minimize} relative entropy w.r.t.\ discretized Wiener measure as used by Gantert \cite{Ga91}.  
This observation  indicates that \eqref{eq:max_ent_prob} is an appropriate way to define the max-entropy win-martingale directly in continuous time. Indeed, we   verify that the martingale $M$ specified in  \eqref{eq:AldousMart} solves the PDE suggested by Aldous and we refer to $M$ as \emph{Aldous martingale}. 


%

\begin{figure}
\centering
\begin{minipage}{.45\textwidth}
\centering 
\includegraphics[width=.7\linewidth]{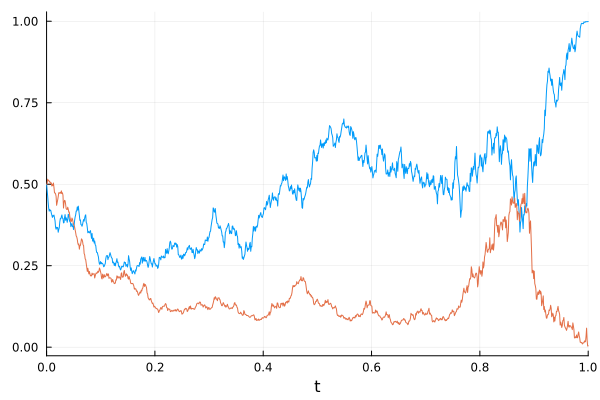}
\caption{Two typical paths of the Aldous martingale for a fair game ($M_0=0.5$).}
  \label{fig:Aldous}
\end{minipage}%
\begin{minipage}{.45\textwidth}
\centering
\includegraphics[width=.7\linewidth]{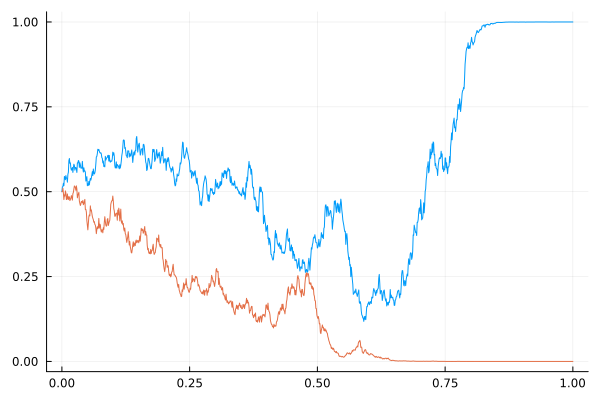}
\caption{Two typical paths of the Bass martingale $X_t:=\Phi(B_t/\sqrt{1-t})$ from $0.5$ to $Bernoulli(0.5)$.}
\label{fig:Bass}
\end{minipage}
\end{figure}

\begin{figure}
\centering
\begin{minipage}{.45\textwidth}
\centering
\includegraphics[width=.7\linewidth]{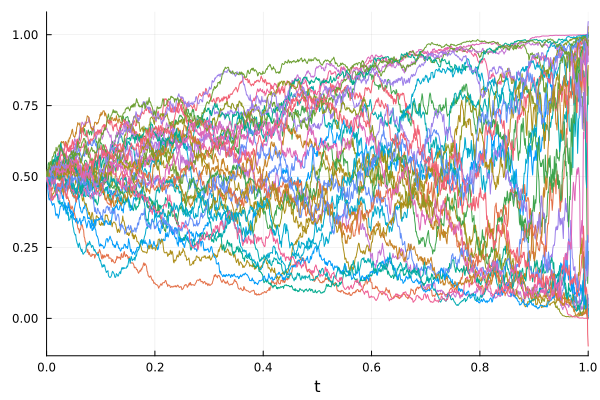}
\caption{Many simulated paths of the Aldous martingale for a fair game.}
  \label{fig:Aldous2}
\end{minipage}%
\begin{minipage}{.45\textwidth}
\centering
\includegraphics[width=.7\linewidth]{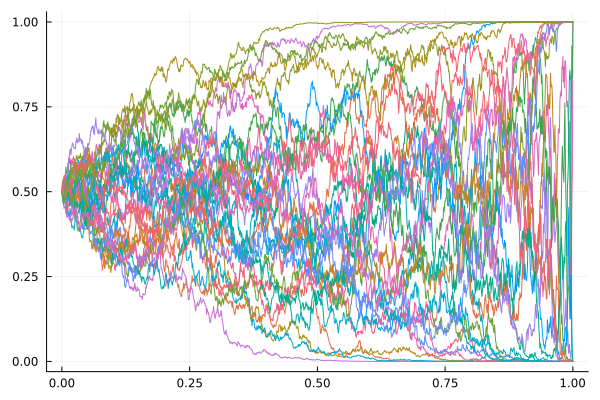}
\caption{Many simulated paths of the Bass martingale.}
\label{fig:Bass2}
\end{minipage}
\end{figure}
In Figures \ref{fig:Aldous} to \ref{fig:Bass2} we compare simulations of the Aldous and the Bass martingale. The Bass martingale was introduced in \cite{Ba83,BaBeHuKa20, BaBeScTs23} an is the martingale with specified initial and terminal distribution which minimizes the \emph{adapted Wasserstein distance} to Brownian motion. 

Note added in revision: briefly after making our note available to the arxiv, and independently from us, Guo, Possama\"i and Reisinger presented a different  solution to Aldous' problem based on PDE-techniques and unrelated to the concept of specific entropy. Importantly, while the article of Guo, Possama\"i and Reisinger \cite{GuPoRe23} as well as the present note yield solutions of the same PDE, the respective solutions satisfy different boundary conditions.

\subsection{Outline of the paper}

In Section 2 we will discuss martingale transport problems in discrete time which optimize an entropy criterion. In particular we detail that maximization of Shannon entropy corresponds precisely to minimization of relative entropy with respect to discretized Wiener measure.

In Section 3 we discuss specific relative entropy in the sense of Gantert \cite{Ga91}. We also observe that the Aldous martingale provides a counterexample to a conjecture posed in \cite{Ga91} about the representation of specific relative entropy.

In Section 4 we derive a new first order condition for martingale transport problems in continuous time. Together with a scaling argument for optimizers of \eqref{eq:max_ent_prob} this allows to identify  the Aldous martingale \eqref{eq:AldousMart} as a candidate optimizer. 

Finally, in Section 5  we use verification arguments to obtain that \eqref{eq:AldousMart}  indeed solves \eqref{eq:max_ent_prob}. In particular, the optimal value of \eqref{eq:max_ent_prob} is equal to
$$\frac{x_0(1-x_0)-1}{2}-\log\left(\frac{\sin(\pi x_0)}{\pi} \right ).$$


\section{Maximization of  entropy and minimization of relative entropy}

As noted above, 
Aldous  \cite{Al22a, Al22b}   poses the problem to determine the \emph{maximal-entropy win-martingale}. To assign a rigorous meaning to  the maximal entropy (in the sense of Shannon ) criterion, his starting point is a formulation in a discrete time, discrete space setting. It is then argued, that a natural scaling limit should exist. 
The goal of this section is to explain the connection between maximization of entropy and minimization of relative entropy  w.r.t.\  the Gaussian random walk / discritized Wiener measure as used in Gantert's definition of specific relative entropy. We emphasize that this section is technically  not required for the  results in this article. Rather the point we want to make is that the notion of  specific relative entropy allows to pose the problem directly in the continuous time setup. 
 
We  deviate from the  setting of \cite{Al22a, Al22b} in that we  consider discrete time (for now)  martingales which are allowed to  take values on the real line. We also allow for general  prescribed initial and terminal distributions $\mu, \nu$  which are in convex order.
For $T\in \NN$ we  write $(X_t)_{t=0}^T$ for the canonical process on the path space $\R^{T+1}$. We denote by  $M_T(\mu, \nu) $  the set of all martingale transport plans, that is, martingale measures $\Q$ on $\R^{T+1}$ such that $X_0(\Q)= \mu, X_T(\Q)=\nu$. We then consider the maximal entropy transport problem 
\begin{align}\label{eq:DiscreteMaxEntropy}
\max_{\Q\in M_T(\mu, \nu)} H(\Q) \quad
\text{where } H(\Q) = \begin{cases} - \int_{\R^{T+1}} q(x) \log q(x)\, dx   & \Q= q \cdot \lambda_{T+1} \\
-\infty & \text{else} \end{cases},
\end{align} 
where $\lambda_d$ denotes Lebesgue measure on $\R^d$.
Note that \eqref{eq:DiscreteMaxEntropy} can be finite only if $\mu, \nu$ are absolutely continuous, which we will  tacitly assume. 
We write $$\gamma_{T}(x_0, \ldots, x_T) \propto  f_0 (x_0)  \exp{\left(\frac1{2\sigma^2}\sum_{i=1}^T (x_i-x_{i-1})^2 \right)} 
$$
for the density of the Gaussian random walk with start in $f_0 \cdot \lambda_1$. 
For $\Q= q \cdot \lambda_{T+1}\in M_T(\mu, \nu)$ the relative entropy $H(\Q|\gamma_T) $ satisfies
\begin{align*}
H(\Q|\gamma_T) & := \int \log \frac q{\gamma_T} \,  d \Q
= \int q(x) \log q(x)\, dx  -  \int \log \gamma_T  d\Q=  
\\
& =  - H(\Q)  +\frac  T 2 \log (2\pi \sigma^2 ) - \int \log f_0 \, d\mu + \int \sum_{t=1}^T\frac{(x_i-x_{i-1})^2} {2 \sigma^2} \, d \Q(x_0, \ldots, x_T) \\ 
&=      - H(\Q)  + \frac  T 2 \log (2\pi \sigma^2 )  - \int \log f_0 \, d\mu + \frac1{2\sigma^2}  \left( \int x_T^2 \, d\nu(x_T) - \int x_0^2 \,  d\mu(x_0)  \right) .
\end{align*}
We thus find that, up to additive constants, maximization of Shannon entropy in \eqref{eq:DiscreteMaxEntropy} corresponds precisely to minimization of relative entropy w.r.t.\ the Gaussian random walk. Note that the choice of $f_0$ is, up to integrability issues, not important for this argument.


\section{On specific relative entropy}

We write $X=(X_t)_{ t \in [0,1]} $ for the canonical process on Wiener space $C=C([0,1])$, and also consider the discretized process
$$X^n:=(X_{k/n})_{k=0, \ldots, n}.$$ We further consider  a probability $\Q$ and denote the Wiener measure with start in $x_0$ by $\WW^{x_0}$. Gantert \cite{Ga91} defines the specific relative entropy of $\Q$ w.r.t.\ 
$\WW^{x_0}$ as the limit of scaled relative entropies of the discretization of $\Q$  w.r.t.\ the Gaussian random walk, i.e.\
\begin{align}\label{eq:GantertDef}
h(\Q|\WW^{x_0})=\lim_{n\to \infty} \frac{1}n H(X^n(\Q)|X^n(\WW^{x_0})) 
\end{align}
whenever this limit exists. 

The specific relative entropy is meaningful even in situations where measures singular to each other are being compared. This is the case of continuous martingale laws, which typically have infinite relative entropy but may still have a finite specific relative entropy. Gantert \cite[Kapitel II.4]{Ga91} shows that $h$ is the rate function in a large deviations principle associated to a randomized Donsker-type approximation of Brownian motion. The specific relative entropy is also studied by F\"ollmer \cite{Fo22a, Fo22b} who uses it to establish Talagrand-type inequalities on the Wiener space beyond the absolutely continuous case. In particular he proves that the squared adapted Wasserstein distance between a continuous martingale and Brownian motion is bounded from above by twice the specific relative entropy. {The quantity \eqref{eq:GantertDef} is also considered independently by Cohen and Dolinsky \cite{CoDo22}, where it plays a role in the derivation of the  scaling limit of utility indifference prices. On the other hand, formula \eqref{eq:specific_entropy}, as well as similar expressions, appeared in the work of Avellaneda, Friedman, Holmes and Samperi \cite{Av01} concerning model calibration in finance.}

According to \cite{Ga91} (in particular situations) and to \cite{BaUn22} (for all sufficiently regular martingale diffusions), we have the alternative expression
\begin{align}\label{eq:IntDef}
 h(\mathbb{Q}| \mathbb{W}^{x_0})= \frac{1}{2}\E_{\Q}\left[ \int_0^1\{\Sigma_t -\log(\Sigma_t) -1\}\,  dt \right ],
\end{align}
where $\Sigma_t$ stands for the density of the quadratic variation at time $t$ for the canonical process under $\Q$. More generally \cite{Ga91, Fo22a} 
show that  
\begin{align} \label{eq:alt_defi}
\liminf_{n\to \infty} \frac{1}n H(X^n(\Q)|X^n(\WW^{x_0})) \geq \frac{1}{2}\E_{\Q}\left[ \int_0^1\{\Sigma_t -\log(\Sigma_t) -1\}\,  dt \right ],
\end{align}
for all martingales, and conjecture that equality should hold for all martingales (with absolutely continuous quadratic variation). We will see in Section 5 that the Aldous martingale has finite specific entropy when using the representation in the r.h.s.\ of \eqref{eq:IntDef}. However $  H(X^n(\Q)|X^n(\WW^{x_0})) =\infty $ for all $n\in \NN$  for \emph{any} win martingale, since $X_1(\Q)\in \{0,1\}$ $\Q$-a.s. For this reason \eqref{eq:IntDef} fails even in the class of time-inhomogeneous diffusions, and it is crucial that we use the representation in \eqref
{eq:specific_entropy} / \eqref{eq:IntDef}  as our definition of specific relative entropy.

\section{First order conditions for martingale optimal transport}

Following \cite{HoNe12, BeHePe12, BoNu13,GaHeTo13}
martingale versions of the classical transport problem (see e.g.\ \cite{Vi03, Vi09, Sa15,FiGl21} for recent monographs) are often considered due to applications in  mathematical finance but admit further applications, e.g.\ to the Skorokhod problem \cite{BeCoHu14, BeNuSt19}. In analogy to classical optimal transport, necessary and sufficient conditions for optimality have been established for martingale transport (MOT) problems in discrete time (\cite{BeJu21, BeNuTo16}) but not so much is known for the continuous time problem. 

We derive here a new first order condition for martingale transport in continuous time which we then use to determine the structure of a candidate optimizer for \eqref{eq:max_ent_prob}. Specifically,  we consider the following MOT problem
%
\begin{align}\label{def:MOT}
\inf\left\{\E_{\Q}\left[ \int_0^1 c\left(t,X_t, \Sigma_t \right) dt \right ]: \Q\in \mathcal M^c([0,1]),\,X_0(\Q)=\mu, X_1(\Q)=\nu \right\},
\end{align}
where $\mathcal M^c([0,1])$ denotes the set of martingale measures on $C$ with an absolutely continuous quadratic variation,  we  denote by $X$ the canonical process and by $\Sigma$ the density of its quadratic variation. As above, we assumes $\mu\leq\nu$ in convex order, so that  $\mathcal M^c([0,1])$ is non empty.


\begin{lemma}\label{eq:FirstOrderMOT}[First order condition for MOT] 
Consider the MOT problem \eqref{def:MOT}, and suppose that $c$ is differentiable in its last variable, that $\Q$ is an optimizer, and that 
$$t\mapsto L_t:= \Sigma_t\partial_\Sigma c(t,X_t,\Sigma_t)-c(t,X_t,\Sigma_t),$$ is a continuous $\Q$-semimartingale. Then $(L_t)_{t\in[0,1)}$ is a  martingale under $\Q$.
\end{lemma}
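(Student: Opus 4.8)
The plan is to perturb the optimizer $\Q$ by a family of competitor martingales that preserve the marginals $\mu,\nu$ and differentiate the cost at the optimum. The natural perturbation is obtained by locally rescaling time (equivalently, the quadratic variation): fix $0<s<u<1$ and a bounded $\F_s$-measurable random variable $\xi$, and consider the competitor obtained by multiplying $\Sigma_t$ on $[s,u]$ by $(1+\eps\xi)$ and compensating on, say, a later interval so that the terminal law is unchanged. More concretely, since only the terminal marginal constrains us (the martingale property is automatic), it is cleanest to use a time-change: let $A^\eps$ be an absolutely continuous, strictly increasing time change that speeds up the clock by factor $(1+\eps\xi)$ on $[s,u]$ and slows it down correspondingly on $[u,u']$ so that $A^\eps_1=1$, and set $M^\eps_t := M_{A^\eps_t}$. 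Then $M^\eps$ is again a continuous martingale with $M^\eps_0=\mu$, $M^\eps_1 = M_1 \sim \nu$, its quadratic variation density is $\Sigma^\eps_t = \dot A^\eps_t \,\Sigma_{A^\eps_t}$, and $M^\eps=M$ for $\eps=0$.

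Next I would write the cost $\E_\Q[\int_0^1 c(t,M^\eps_t,\Sigma^\eps_t)\,dt]$ along this family, change variables back via $r = A^\eps_t$, and compute $\frac{d}{d\eps}\big|_{\eps=0}$. The two intervals $[s,u]$ and $[u,u']$ each contribute a term of the form $\int \xi\,\big(\partial_\Sigma c(r,M_r,\Sigma_r)\,\Sigma_r - c(r,M_r,\Sigma_r)\big)\,dr$ — this is exactly where the combination $L_r = \Sigma_r\partial_\Sigma c - c$ emerges: it is the derivative of the integrand with respect to a multiplicative speed change, since $\frac{d}{d\eps}\int_0^{A^\eps_{t}} c\,dr$ picks up $c$ from the moving endpoint while $\frac{d}{d\eps} c(\cdot,\Sigma^\eps)$ picks up $\Sigma\partial_\Sigma c$ from the stretched density. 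Optimality forces this derivative to vanish for every choice of $\xi$, which yields $\E_\Q\big[\xi \int_s^u L_r\,dr\big] = \E_\Q\big[\xi \int_u^{u'} L_r\,dr\big]$. By the arbitrariness of the bounded $\F_s$-measurable $\xi$ this says $\E_\Q[\int_s^u L_r\,dr \mid \F_s] = \E_\Q[\int_u^{u'} L_r\,dr\mid \F_s]$, and then letting $u\downarrow s$ and $u'\downarrow u$ (dividing by interval lengths and using continuity of $L$) one extracts $\E_\Q[L_u\mid\F_s]=L_s$ for $s<u<1$, i.e. the martingale property of $(L_t)_{t\in[0,1)}$. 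Using that $L$ is assumed to be a continuous semimartingale, an equivalent and perhaps cleaner route is: the above shows its finite-variation part integrates to zero against all such test variables, hence the drift of $L$ vanishes on $[0,1)$, so $L$ is a local martingale, and continuity plus the semimartingale hypothesis upgrade this appropriately.

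The main obstacle is the admissibility and integrability bookkeeping for the perturbation, not the formal computation. One must check that $M^\eps \in \mathcal M^c([0,1])$ with the correct marginals (the time change must return to $1$ at $t=1$ and keep $\Sigma^\eps$ an honest density), that the cost is finite along the family so the derivative makes sense, and — most delicately — that one may interchange $\frac{d}{d\eps}$ with $\E_\Q[\int_0^1 \cdot\,dt]$; this needs a dominated-convergence argument using local boundedness of $\partial_\Sigma c$ and a uniform bound on $\Sigma^\eps$ near $\eps=0$, which is why the conclusion is stated only on $[0,1)$ and under the a priori semimartingale assumption on $L$. A secondary technical point is that one should localize in $\omega$ (stopping when $|M|$, $\Sigma$, or $1/\Sigma$ exit compact sets) so that differentiability of $c$ in its last argument can be used quantitatively; the martingale conclusion for $L$ then passes to the limit as the localizing stopping times increase to $1$.
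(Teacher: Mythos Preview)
Your approach is essentially the paper's: perturb the optimizer by an adapted time-change that fixes the endpoints, differentiate the cost at $\eps=0$, and read off that the combination $L_t=\Sigma_t\partial_\Sigma c - c$ must have vanishing drift. The paper's execution is a bit cleaner than your two-interval construction: it takes a \emph{general} absolutely continuous adapted $H$ with $H_0=H_1=0$, $h:=H'\in(-1,1)$, sets $\tau^\eps_t=t+\eps H_t$, and computes directly
\[
\frac{d}{d\eps}\Big|_{\eps=0}\E_\Q\int_0^1 c\big(\tau^\eps_t,X_{\tau^\eps_t},\Sigma_{\tau^\eps_t}(\tau^\eps_t)'\big)\,dt
=\E_\Q\int_0^1 L_s\,h_s\,ds=0.
\]
One integration by parts then gives $\E_\Q\int_0^1 H_t\,dL_t=0$ for all such $H$, and the semimartingale assumption on $L$ immediately yields the martingale property on $[0,1)$. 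This is precisely your ``cleaner route'' (testing the finite-variation part against adapted $H$), so you had the right endpoint in mind.

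Two small slips in your first route are worth flagging. First, the relation you write, $\E_\Q[\xi\int_s^u L_r\,dr]=\E_\Q[\xi\int_u^{u'} L_r\,dr]$, is missing the length normalization: if you speed up by $(1+\eps\xi)$ on $[s,u]$, the compensating slowdown on $[u,u']$ has rate proportional to $\frac{u-s}{u'-u}$, so the correct identity equates the \emph{averages} $\frac{1}{u-s}\int_s^u L$ and $\frac{1}{u'-u}\int_u^{u'} L$ under $\E_\Q[\xi\,\cdot\,]$. Second, sending $u\downarrow s$ and $u'\downarrow u$ simultaneously collapses both sides to $L_s$ and gives nothing; you must take the limits in sequence (e.g.\ first $u\downarrow s$ to get $L_s$ on the left while keeping $u'$ fixed, then vary $u'$), or --- simpler --- go straight to the integration-by-parts argument, as the paper does.
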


Of course if $c=c(\Sigma)$ is either convex or concave one can write $xc'(x)-c(x)=c^*\circ c'(x)$ {by the definition of the convex/concave conjugate $c^*$}.

\begin{proof} Consider now $H$ an absolutely continuous adapted process satisfying
$$H_0=0=H_1\,\,\,\text{ and }\,\,\, h_t:=H_t' \in (-1,1).$$
It follows that for all $0\leq \epsilon\leq 1$ the function
$$\tau^\epsilon_t= t+\epsilon H_t,$$
defines a time change. Namely, this is a continuous  increasing adapted process  starting at $0$ and ending at $1$. The time-changed martingale $\omega^\epsilon_t:=X_{\tau^\epsilon_t}$ has the same starting and final marginals, and so it must be suboptimal for the MOT problem. Since the density of the quadratic variation of $\omega^\epsilon_t$ is $$\Sigma_{\tau^\epsilon_t}\cdot(\tau^\epsilon_t)',$$ we compute that the cost associated to $\omega^\epsilon_t$ is
$$\EE_\Q\int_0^1 c\left (\tau^\epsilon_t,\omega^\epsilon_t,\Sigma_{\tau^\epsilon_t}\cdot(\tau^\epsilon_t)'\right )dt = \EE_\Q\int_0^1 c\left (s,X_s,\Sigma_{s}\cdot\{(\tau^\epsilon_\cdot)'\circ (\tau^\epsilon_s)^{-1}\right \})\frac{ds}{(\tau^\epsilon_\cdot)'\circ (\tau^\epsilon_s)^{-1}}.$$
Since $(\tau^\epsilon_\cdot)'\circ (\tau^\epsilon_s)^{-1}=1+h_{(\tau^\epsilon_s)^{-1}}$, we have
$$\EE_\Q\int_0^1 c\left (\tau^\epsilon_t,\omega^\epsilon_t,\Sigma_{\tau^\epsilon_t}\cdot(\tau^\epsilon_t)'\right )dt =   \EE_\Q\int_0^1 c\left (s,X_s,\Sigma_{s}\{1+h_{(\tau^\epsilon_s)^{-1}}\}\right )\frac{ds}{1+h_{(\tau^\epsilon_s)^{-1}}}.$$
Then a few computations reveal that
$$\frac{d}{d\epsilon}\Big\vert_{\epsilon=0}\,\EE_\Q\int_0^1 c\left (\tau^\epsilon_t,\omega^\epsilon_t,\Sigma_{\tau^\epsilon_t}\cdot(\tau^\epsilon_t)'\right )dt=\EE_\Q\int_0^1\{-c(s,X_s,\Sigma_s)+ \partial_\Sigma c(s,X_s,\Sigma_s)\Sigma_s\}h_sds =0,$$
for all $h=H'$ as above. From this, and integration by parts, we have that  $\EE_\Q\int_0^1 H_t dL_t$ is equal to zero, for all $H$ as above and with $L$ as in the statement of this lemma. We conclude that $L$ must be a martingale for $t\in[0,1)$.
\end{proof}

Going back to \eqref{eq:max_ent_prob}, that is,  $c(\Sigma)=\frac{1}{2}[\Sigma-\log(\Sigma)-1]$, $\mu=\delta_{x_0}$, $\nu=x_0\delta_1+(1-x_0)\delta_0$, the previous lemma shows that if $\Q$ is optimal then \[t\mapsto \log(\Sigma_t),\]
 is a martingale under $\Q$. Passing to the notation where $\Sigma=\sigma^2$ denotes the density of the quadratic variation, we also conclude that 
\[t\mapsto \log(\sigma_t),\]
is a martingale (we convene that $\sigma$ is the positive square root of $\sigma^2$). If the coefficient $\sigma$ is Markovian, then the process $\log(\sigma_t)$ can only be a martingale if
\begin{align}
\left(\partial_t+\frac{1}{2}\sigma^2\partial^2_{xx}\right)\log\circ\sigma =0, \label{eq:foc_FP1}
\end{align}
We emphasize that this is exactly the PDE suggest by Aldous \cite{Al22a} for the scaling limit of maximum entropy win martingales.

We will use this piece of information to obtain a candidate optimizer for the problem at hand.
To obtain further intuition on the structure of $\sigma$, note the goal is to find the win martingale  as close as possible to Brownian motion, formalized  by an entropy criterion. By the chain rule of entropy, this suggests, independently of the behavior of the optimizer $M$ before it reaches a level $m$ at time $t$, that the behavior from that time point on should be optimal again. First of all, this indicates that the optimal $\sigma$ is Markovian/feedback, so 
$$dM_t=\sigma(t,M_t)dBt.$$ 
In fact, due to special nature of the constraint to be a win martingale, we can make even stronger guesses concerning the structure of $\sigma$: If we start the optimization problem at time $t$ at level $M_t=x$ we face again the problem to be as close as possible to Brownian motion subject to being a martingale which terminates at time $1$ in a Bernoulli distribution. That is, we face the very same optimization problem as at time $0$ apart from the fact that the time horizon is now $1-t$ rather than $1$ in the original problem. We thus expect the optimizer to be the same as the original optimizer, scaled by the factor $1-t$, that is, run at a speed which is higher by a factor of $\frac 1{1-t}.$ As volatility is the square root of quadratic variation this amounts to  
\begin{align}\label{eq:VeryEducatedGuess}
\sigma(t, x) = \frac1 {\sqrt{1-t}} \sigma(1,x).
\end{align}
Writing $\sigma(x):= \sigma(1,x)$ and plugging \eqref{eq:VeryEducatedGuess}  into \eqref{eq:foc_FP1} yields
\begin{align*}
 & \left(\partial_t+\frac{\sigma^2(x)}{2 (1-t)}\partial^2_{xx}\right)\left(  -\frac 12 \log(1-t)  + \log(\sigma(x)) \right) =0,\\
\Leftrightarrow \quad  & -\frac 12 \frac1{1-t} = \frac 12 \frac1{1-t} \sigma^2(x) \left(\frac{ \sigma''(x) \sigma(x) - (\sigma'(x))^2}{\sigma(x)^2} \right)\\
\Leftrightarrow \quad &\hspace{8mm} -1 = \sigma''(x) \sigma(x) - (\sigma'(x))^2.
\end{align*}
This ODE is solved by $\sigma(x)= \frac 1 \alpha \sin(\alpha x + \beta)$ for real constants $\alpha, \beta$. As the optimizer is supposed to terminate in $0$ or $1$, the only reasonable choice here is $\alpha= \pi, \beta = 0$. 
%
%
%
%
All in all, our Ansatz for the optimal $\sigma$ is
\[\so(t,x):=\frac{\sin(\pi x)}{\pi\sqrt{1-t}}.\]

\section{Optimality of the Ansatz}

We denote by $\mo^{s,x}$ the martingale which starts from time $s\leq 1$ at the position $x\in[0,1]$ and is determined by $\so$. On $[s,1)$ it satisfies the SDE
\[d\mo^{s,x}_t = \frac{\sin(\pi \mo^{s,x}_t)}{\pi\sqrt{1-t}}dB_t\]
We remark that for all $\epsilon$ small enough the coefficient $\so$ is smooth and has bounded derivatives of all orders for $t\in(s,1-\epsilon)$. In particular the above SDE admits a unique strong solution on $[s,1)$. Observe that, for $x\in\{0,1\}$, if $M^{s,x}_\ell = 0$ then also $M^{s,x}_t = 0$ for all $t\in(\ell,1)$. In particular then we have $0\leq\inf_{t\in[s,1)}\mo_t^{s,x}\leq \sup_{t\in[s,1)}\mo_t^{s,x}\leq 1$ a.s. Hence the martingale is bounded in $L^p$ for every $p$ and in particular $\mo^{x,s}_1:= \lim_{t\to 1} \mo^{x,s}_t$ exists a.s.\ and in $L^2$. Thus $\mo^{x,s}_1\in [0,1]$ and  $\mathbb E[\langle \mo^{x,s}\rangle_1]<\infty$, hence also 
$$\mathbb E\left[\int_s^1 \frac{\sin^2(\pi \mo^{s,x}_t)}{1-t}dt \right ]<\infty,$$
and in particular $\int_s^1 \frac{\sin^2(\pi \mo^{s,x}_t)}{1-t}dt <\infty$ a.s. We conclude that the event $\{\mo^{s,x}_1\in (0,1)\}$ is negligible since on this event $\int_s^1 \frac{\sin^2(\pi \mo^{s,x}_t)}{1-t}dt=+\infty$.

We summarise this discussion:

\begin{lemma}
	$\mo^{s,x}$ is well-defined on the whole interval $[s,1]$, it is a continuous martingale bounded in every $L^p$, and it satisfies $\mo^{s,x}_1\in\{0,1\}$ a.s. (implying that $\mo^{s,x}_1\sim Bernoulli(x)$).
\end{lemma}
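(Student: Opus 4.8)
The plan is to make rigorous the heuristic discussion already given before the statement, organised into a few steps.

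First I would establish local existence and uniqueness together with the a priori bound. On any interval $[s,1-\epsilon]$ with $\epsilon>0$ small, the coefficient $\so(t,x)=\sin(\pi x)/(\pi\sqrt{1-t})$ is bounded and Lipschitz in $x$ uniformly in $t$, so classical SDE theory yields a unique strong solution there; patching these solutions as $\epsilon\downarrow 0$ defines $\mo^{s,x}$ on $[s,1)$. The key point is then that $\so(t,\cdot)$ vanishes at $0$ and $1$, so these levels are absorbing: once $\mo^{s,x}_\ell\in\{0,1\}$ the process is constant on $[\ell,1)$ (by uniqueness of the degenerate SDE, or a direct localisation argument). Since a continuous martingale started in $[0,1]$ cannot leave the interval without first touching an endpoint, this gives $0\le\mo^{s,x}_t\le 1$ for all $t\in[s,1)$ a.s., and in particular no explosion occurs before time $1$. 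Consequently $\mo^{s,x}$ is bounded, hence bounded in $L^p$ for every $p$.

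Second, being a bounded martingale, $\mo^{s,x}$ converges a.s.\ and in every $L^p$ to a limit $\mo^{s,x}_1\in[0,1]$, so that $\mo^{s,x}$ is a continuous martingale on all of $[s,1]$. Moreover $\E[\langle\mo^{s,x}\rangle_1]=\E[(\mo^{s,x}_1)^2]-x^2<\infty$, whence $\E\big[\int_s^1 \sin^2(\pi\mo^{s,x}_t)/(1-t)\,dt\big]<\infty$ and in particular $\int_s^1 \sin^2(\pi\mo^{s,x}_t)/(1-t)\,dt<\infty$ a.s.

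Third, I would rule out $\mo^{s,x}_1\in(0,1)$: on that event, continuity of $t\mapsto\mo^{s,x}_t$ forces $\sin^2(\pi\mo^{s,x}_t)$ to stay bounded away from $0$ on a (random) neighbourhood of $t=1$, which makes $\int_s^1 \sin^2(\pi\mo^{s,x}_t)/(1-t)\,dt=+\infty$, contradicting the previous step. Hence $\mo^{s,x}_1\in\{0,1\}$ a.s. Finally the martingale property and $\mo^{s,x}_s=x$ give $\E[\mo^{s,x}_1]=x$, which together with $\mo^{s,x}_1\in\{0,1\}$ yields $\mo^{s,x}_1\sim Bernoulli(x)$.

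The main obstacle is the behaviour near $t=1$, where the diffusion coefficient blows up like $(1-t)^{-1/2}$: one must be sure that the solution does not explode before time $1$ (this is exactly where the uniform bound coming from the absorbing boundaries is essential) and that the $L^2$-control of the quadratic variation is sharp enough to push the terminal value into $\{0,1\}$. Everything else is a routine application of standard SDE and martingale convergence arguments.
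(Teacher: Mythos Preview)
Your proposal is correct and follows exactly the argument the paper gives in the discussion preceding the lemma: existence on $[s,1)$ via local Lipschitz bounds, absorbing endpoints $\{0,1\}$ forcing $\mo^{s,x}_t\in[0,1]$ and hence $L^p$-boundedness, martingale convergence to define $\mo^{s,x}_1$, and the contradiction between $\E[\langle\mo^{s,x}\rangle_1]<\infty$ and the divergence of $\int_s^1\sin^2(\pi\mo^{s,x}_t)/(1-t)\,dt$ on $\{\mo^{s,x}_1\in(0,1)\}$. There is nothing to add; you have simply written out the paper's summary in slightly more detail.
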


In fact we have a bit more:

\begin{lemma}\label{lem:FellerTest}
	Let $x\in(0,1)=:I$. If $\tau$ denotes the first time that $\mo^{s,x}$ exits $I$, then $\tau=1$ a.s.
\end{lemma}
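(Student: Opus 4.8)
The plan is to use Feller's test for explosions, applied to the one-dimensional diffusion $\mo^{s,x}$ on the open interval $I=(0,1)$, to show that the process cannot reach the endpoints $0$ or $1$ in finite time strictly before $t=1$. Fix $s<1$ and work on a time interval $[s,1-\eps]$ where the coefficient $\so(t,\cdot)$ is smooth, bounded, and bounded away from degeneracy on compact subsets of $I$; the only place the diffusion coefficient vanishes is at $x=0$ and $x=1$, and the only place it blows up in $t$ is at $t=1$. Since the drift is zero, the natural scale function is simply $p(x)=x$, so in scale coordinates the process is already a (time-changed) local martingale on $I$ with no drift, and the question of whether the endpoints are accessible reduces to whether the time change $\int_s^{\cdot}\so(r,\mo_r)^2\,dr$ forces the boundary to be hit.

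Concretely, I would argue as follows. Suppose for contradiction that $\tau<1$ with positive probability, where $\tau$ is the first exit time of $I$; without loss of generality suppose $\mo^{s,x}_\tau=0$ on an event of positive probability (the case of $1$ is symmetric via $x\mapsto 1-x$, under which $\so$ is invariant). On the event $\{\tau<1\}$, the quantity $\int_s^\tau \frac{\sin^2(\pi \mo^{s,x}_r)}{1-r}\,dr$ is finite, because on $[s,\tau]$ we have $1-r\geq 1-\tau>0$ and $\sin^2$ is bounded, so this integral is at most $\frac{1}{1-\tau}(\tau-s)<\infty$. But the preceding lemma's discussion already established $\int_s^1 \frac{\sin^2(\pi \mo^{s,x}_t)}{1-t}\,dt<\infty$ a.s. and that $\mo^{s,x}_1\in\{0,1\}$ a.s.; combined with path continuity and the fact that once $\mo$ hits $0$ it is absorbed there, hitting $0$ before time $1$ is not in itself contradictory. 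So the real content must come from a Feller-type integral test showing $0$ and $1$ are \emph{natural} (inaccessible) boundaries for the diffusion $dY_t=\so(t,Y_t)\,dB_t$ run up to time $1$. For the time-homogeneous comparison, near $x=0$ we have $\so(t,x)^2 \asymp \frac{\pi^2 x^2}{1-t}\cdot\frac{1}{\pi^2}= \frac{x^2}{1-t}$, i.e.\ locally like $x^2$ times a bounded-below factor on $[s,1-\eps]$; for the SDE $dY=Y\,dB$ the origin is an inaccessible (natural) boundary, which is exactly the statement that geometric Brownian motion never hits zero. I would make this rigorous by a comparison/time-change argument: freeze $t$ at the worst case $t=1-\eps$ to get a lower bound $\so(t,x)^2\geq \frac{\sin^2(\pi x)}{\pi^2}$ on $[s,1-\eps]$, reduce to the autonomous diffusion with coefficient $\frac{\sin(\pi x)}{\pi}$ on $I$, and check Feller's test: with scale $p(x)=x$ and speed measure $m(dx)=\frac{2\,dx}{(\sin(\pi x)/\pi)^2}$, the boundary $0$ is natural because $\int_0 (p(x)-p(0))\,m(dx)=\int_0 x\cdot \frac{2\pi^2}{\sin^2(\pi x)}\,dx=\infty$ (the integrand behaves like $2/x$ near $0$), and likewise at $1$. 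Hence on every interval $[s,1-\eps]$ the process stays in $I$ a.s., and letting $\eps\downarrow 0$ gives $\tau\geq 1-\eps$ for all $\eps$, i.e.\ $\tau=1$ a.s.

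I expect the main obstacle to be handling the time-inhomogeneity cleanly, since the blow-up of $\so$ as $t\to 1$ and the vanishing of $\so$ as $x\to\{0,1\}$ interact precisely at the corner $(1,0)$ and $(1,1)$; the crude bound $1-r\geq 1-\tau$ on $\{\tau<1\}$ sidesteps this on $[s,1-\eps]$ but one must be careful that the contradiction is with \emph{finite-time} accessibility before $1$, not with behavior at $1$ itself. A second, more bookkeeping-type obstacle is justifying the comparison argument — that lowering the diffusion coefficient pointwise on $[s,1-\eps]$ to the autonomous $\frac{\sin(\pi x)}{\pi}$ cannot make an inaccessible boundary accessible. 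This can be done either by a direct Feller test for the time-inhomogeneous generator on $[s,1-\eps]\times I$ (checking that $\int_0 (p(x)-p(0))\,m_t(dx)=\infty$ uniformly in $t\in[s,1-\eps]$, which is immediate from the uniform lower bound on $1-t$), or by invoking a standard comparison theorem for one-dimensional SDEs together with the fact that $dY=Y\,dB$ — equivalently geometric Brownian motion — never reaches $0$; either route is routine once the reduction is in place. An alternative, perhaps slicker, route avoiding Feller entirely: apply It\^o's formula to $\log \mo^{s,x}$ (or to $\log\sin(\pi\mo^{s,x})$) on $[s,\tau\wedge(1-\eps)]$ and show the resulting local martingale plus finite-variation part cannot diverge to $-\infty$ in finite time, which again pins the exponential rate of approach to the boundary and forbids actual absorption before $1$; I would present whichever of these is shortest, but the Feller test is the most transparent and I would lead with it.
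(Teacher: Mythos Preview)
Your overall strategy—reduce to the autonomous diffusion $dY=\frac{\sin(\pi Y)}{\pi}\,dW$ and apply Feller's test—is exactly what the paper does, but the paper reaches the autonomous equation by an \emph{exact} reduction rather than a comparison. It observes that the time-inhomogeneity factors out deterministically: setting $M_t:=\mo^{s,x}_{1-e^{-t}}$ (equivalently, the substitution $u=1-e^{-t}$, so $du=(1-u)\,dt$), the factor $\frac{1}{1-u}$ in $\so^2$ is absorbed by the Jacobian, and $M$ solves $dM_t=\frac{\sin(\pi M_t)}{\pi}\,dW_t$ on all of $\mathbb{R}_+$. One then applies the standard (time-homogeneous) Feller test and checks $V(y)=\int_{1/2}^y \frac{y-z}{\sin^2(\pi z)}\,dz$ diverges at both endpoints. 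No $\eps$-truncation, no time-inhomogeneous Feller test, no comparison principle.

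On your comparison step: the direction is backwards. You establish inaccessibility for the \emph{smaller} coefficient $\frac{\sin(\pi x)}{\pi}$ and try to infer it for the \emph{larger} $\so(t,x)$, but the Feller integral $\int_0 x/\sigma^2(x)\,dx$ only decreases when $\sigma$ increases, so inaccessibility does not propagate upward in $\sigma$. What actually helps is the \emph{upper} bound $\so(t,x)^2\le \frac{\sin^2(\pi x)}{\pi^2\eps}$ on $[s,1-\eps]$; this is what your parenthetical ``uniform lower bound on $1-t$'' really delivers, and it is the correct input to force the Feller integral to diverge. (Your appeal to ``standard comparison theorems for one-dimensional SDEs'' is also off: those compare drifts, not diffusion coefficients.) All of this is repairable—one can bound $\so^2$ on $[s,1-\eps]$ above and below by constant multiples of $\sin^2(\pi x)/\pi^2$ and view $\mo$ as a bounded random time-change of $Y$—but the paper's explicit deterministic time change turns the whole argument into a two-line computation and is the cleaner route you should record.
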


\begin{proof}
Wlog $s=0$.	We define, for $t\in\mathbb R_+$, the martingale $M_t :=\mo^{0,x}_{1-\exp(-t)}$.
Hence
\[\langle M\rangle_t = \int_0^{1-\exp(-t)}\frac{\sin^2(\pi\mo^{0,x}_u)}{\pi^2(1-u)}du = \int_0^t\frac{\sin^2(\pi M_s)}{\pi^2}ds,
\]
where we employed the change of variables $s=-\log(1-u)$. Hence $M$ satisfies the SDE on $\mathbb R_+$
\[dM_t = \frac{\sin(\pi M_t)}{\pi}dW_t, \]
with $M_0=x\in I$.
Observe that $\sigma^2(z):=\frac{\sin^2(\pi z)}{\pi^2}>0$ for $z\in I$, and that $1/\sigma^2(z)$ is upper bounded on every compact interval contained in $I$. Hence we can apply Feller's test for explosions (see e.g.\  \cite[Theorem 5.29]{KaSh12}), according to which $M$ does not leave $I$ in finite time (a.s.) if and only if the function
\[V(y):=\int_{1/2}^y\frac{y-z}{\sin^2(\pi z)}dz
\]
satisfies $V(0+)=V(1-)=\infty$.  The latter is clearly fulfilled.
\end{proof}


\begin{lemma}\label{lem_log_is_mart}
	The process $L_t:=\log\frac{\sin(\pi\mo^{s,x}_t)}{\pi\sqrt{1-t}}$ is a martingale on $[0,1)$.
\end{lemma}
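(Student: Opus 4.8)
The plan is to apply It\^o's formula to the space-time function
\[
F(t,x) := \log\frac{\sin(\pi x)}{\pi\sqrt{1-t}} = \log\sin(\pi x) - \log\pi - \tfrac12\log(1-t),
\]
which is smooth on $[0,1)\times(0,1)$, so that $L_t = F(t,\mo^{s,x}_t)$. A direct computation gives $\partial_t F = \tfrac{1}{2(1-t)}$, $\partial_x F(t,x) = \pi\cot(\pi x)$ and $\partial_{xx}F(t,x) = -\tfrac{\pi^2}{\sin^2(\pi x)}$, whence
\[
\partial_t F(t,x) + \tfrac12\so^2(t,x)\,\partial_{xx}F(t,x) = \frac{1}{2(1-t)} - \frac12\cdot\frac{\sin^2(\pi x)}{\pi^2(1-t)}\cdot\frac{\pi^2}{\sin^2(\pi x)} = 0,
\]
which is exactly the statement that $\so$ was constructed to solve \eqref{eq:foc_FP1} (note $\log\circ\so = F$). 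Consequently It\^o's formula should yield, formally,
\[
dL_t = \pi\cot(\pi\mo^{s,x}_t)\,d\mo^{s,x}_t = \frac{\cos(\pi\mo^{s,x}_t)}{\sqrt{1-t}}\,dB_t,
\]
i.e.\ $L$ has vanishing drift and is a continuous local martingale.

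To make this rigorous I would localize, since $F$ is singular at $x\in\{0,1\}$. For $k\in\NN$ set $\tau_k := \inf\{t\ge s:\ \mo^{s,x}_t\notin(1/k,\,1-1/k)\}$. On $[s,\tau_k]$ the process $\mo^{s,x}$ takes values in a compact subset of $(0,1)$ on which $F$ is smooth, so It\^o's formula legitimately applies on $[s,t\wedge\tau_k]$ for any fixed $t<1$ and gives
\[
L_{t\wedge\tau_k} = L_s + \int_s^{t\wedge\tau_k}\frac{\cos(\pi\mo^{s,x}_u)}{\sqrt{1-u}}\,dB_u.
\]
By Lemma~\ref{lem:FellerTest} the process $\mo^{s,x}$ does not exit $(0,1)$ before time $1$, so $\tau_k\uparrow 1$ a.s.; in particular, for any fixed $t<1$ one has $\tau_k\ge t$ for all $k$ large enough (a.s.).

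Finally I would upgrade this to a true martingale on $[s,1)$: since $|\cos(\pi\mo^{s,x}_u)|\le 1$ and $\int_s^t\frac{du}{1-u} = \log\frac{1-s}{1-t}<\infty$ for every $t<1$, one gets $\EE\big[\int_s^t\frac{\cos^2(\pi\mo^{s,x}_u)}{1-u}\,du\big]<\infty$, so $t\mapsto\int_s^t\frac{\cos(\pi\mo^{s,x}_u)}{\sqrt{1-u}}\,dB_u$ is a genuine square-integrable martingale on every $[s,t]$ with $t<1$. Letting $k\to\infty$ in the localized identity (dominated convergence for the stochastic integrals, using this $L^2$ bound) identifies $L_t = L_s + \int_s^t\frac{\cos(\pi\mo^{s,x}_u)}{\sqrt{1-u}}\,dB_u$ on $[s,1)$, which is therefore a martingale. (This also explains why $L$ is only a martingale on $[0,1)$: $\int_s^1\frac{du}{1-u}=\infty$, so the argument genuinely breaks at $t=1$.)

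The main obstacle is precisely the boundary singularity of $\log\sin(\pi x)$ at $x\in\{0,1\}$: It\^o's formula cannot be invoked globally, and one must know that $\mo^{s,x}$ avoids the boundary on all of $[s,1)$ — which is exactly the content of Lemma~\ref{lem:FellerTest}, the reason it was established first. The remaining ingredients, the drift cancellation and the integrability estimate $\EE[\langle L\rangle_t]<\infty$ for $t<1$, are routine.
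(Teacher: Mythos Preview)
Your proposal is correct and follows essentially the same approach as the paper: apply It\^o's formula to $F(t,x)=\log\frac{\sin(\pi x)}{\pi\sqrt{1-t}}$, check that the drift vanishes (this is precisely \eqref{eq:foc_FP1}), and then argue that the resulting stochastic integral $\int_s^t\frac{\cos(\pi\mo^{s,x}_u)}{\sqrt{1-u}}\,dB_u$ is a true martingale on $[s,1)$ via the bound $\int_s^t\frac{du}{1-u}<\infty$. You are in fact more careful than the paper about the localization near the boundary, explicitly invoking Lemma~\ref{lem:FellerTest} to justify that $\mo^{s,x}$ stays in $(0,1)$ so that It\^o's formula applies --- a point the paper leaves implicit.
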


\begin{proof}
	Let $f(t,z):= \log\frac{\sin(\pi z)}{\pi\sqrt{1-t}} $ so $L_t=f(t,\mo^{s,x}_t)$. Hence $\partial_z f = \frac{\pi \cos(\pi z)}{\sin(\pi z)} $ and so
	\[\int_s^{s'} \partial_z f(t, \mo^{s,x}_t)d \mo^{s,x}_t = \int_s^{s'} \frac{\cos(\pi \mo^{s,x}_t)}{\sqrt{1-t}}dB_t,\]
	which is a local martingale bounded in $L^1$ as long as $t\in[0,1-\epsilon]$. Hence it is an actual martingale on $[0,1-\epsilon]$ for $0<\epsilon<1$ arbitrary, and thus also a martingale on $[0,1)$. On the other hand
	\begin{align*}
	\partial_tf(t,z)+\frac{\so(t,z)}{2}\partial^2_{zz}f(t,z)&=	\partial_tf(t,z)+\frac{\sin^2(\pi z)}{2\pi^2(1-t)}\partial^2_{zz}f(t,z)\\ &=\frac{1}{2(1-t)}+\frac{\sin^2(\pi z)}{2\pi^2(1-t)}\partial_{z}\left(\frac{\pi\cos(\pi z)}{\sin(\pi z)}\right)\\
		&= \frac{1}{2(1-t)} - \frac{1}{2(1-t)}\{\sin^2(\pi z)+\cos^2(\pi z)\} \\
		& = 0. 
	\end{align*}
	Hence \[L_t=L_s + \int_s^{t} \frac{\cos(\pi \mo^{s,x}_u)}{\sqrt{1-u}}dB_u,\]
	and we conclude.
\end{proof}

Associated to the martingale $\mo$ we define its cost
\begin{align}\label{eq:def_v}
\vo(s,x):=\frac{1}{2}\mathbb E\left[\int_s^1 \left \{\frac{\sin^2(\pi\mo^{s,x}_t)}{\pi^2(1-t)}- 2 \log\frac{\sin(\pi\mo^{s,x}_t)}{\pi\sqrt{1-t}} -1\right \} dt  \right ].
\end{align}

\begin{lemma}\label{lem_formula_v}
We have $$\vo(s,x)= \frac{x-x^2-1+s}{2}-(1-s)\log\left(\frac{\sin(\pi x)}{\pi\sqrt{1-s}} \right ).$$	
\end{lemma}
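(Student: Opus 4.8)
The plan is to compute $\vo(s,x)$ directly by identifying the right martingale and using optional stopping, exploiting the two key facts established above: that $\mo^{s,x}_t$ stays in $(0,1)$ until time $1$ (Lemma~\ref{lem:FellerTest}) and that $L_t=\log\frac{\sin(\pi\mo^{s,x}_t)}{\pi\sqrt{1-t}}$ is a martingale on $[s,1)$ (Lemma~\ref{lem_log_is_mart}). First I would handle the three terms in \eqref{eq:def_v} separately. The $-1$ term contributes $-\tfrac12(1-s)$ trivially. For the logarithmic term, since $L$ is a martingale, $\E[\log\frac{\sin(\pi\mo^{s,x}_t)}{\pi\sqrt{1-t}}] = \log\frac{\sin(\pi x)}{\pi\sqrt{1-s}}$ for every $t\in[s,1)$, so $\E\int_s^1(-2)\log\frac{\sin(\pi\mo^{s,x}_t)}{\pi\sqrt{1-t}}\,dt = -2(1-s)\log\frac{\sin(\pi x)}{\pi\sqrt{1-s}}$, provided one can justify Fubini (the integrand is integrable because $L$ is bounded in $L^1$ on $[s,1-\epsilon]$ uniformly and the behavior near $t=1$ must be controlled — see the obstacle below). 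This already produces the $-(1-s)\log\frac{\sin(\pi x)}{\pi\sqrt{1-s}}$ in the claimed formula.

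For the remaining quadratic-variation term $\tfrac12\E\int_s^1\frac{\sin^2(\pi\mo^{s,x}_t)}{\pi^2(1-t)}\,dt = \tfrac12\E[\langle\mo^{s,x}\rangle_1]$, I would use the martingale $(\mo^{s,x}_t)^2 - \langle\mo^{s,x}\rangle_t$. Since $\mo^{s,x}$ is bounded in $L^2$ and $\mo^{s,x}_1\sim\mathrm{Bernoulli}(x)$, optional stopping at $t=1$ gives $\E[\langle\mo^{s,x}\rangle_1] = \E[(\mo^{s,x}_1)^2] - x^2 = x - x^2$, so this term contributes $\tfrac12(x-x^2)$. Collecting: $\vo(s,x) = \tfrac12(x-x^2) - \tfrac12(1-s) - (1-s)\log\frac{\sin(\pi x)}{\pi\sqrt{1-s}} = \frac{x-x^2-1+s}{2} - (1-s)\log\left(\frac{\sin(\pi x)}{\pi\sqrt{1-s}}\right)$, which is exactly the claim. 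So structurally the proof is short: two applications of optional stopping plus Fubini.

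The main obstacle is the $L^1$-integrability/Fubini justification for the logarithmic term near $t=1$. As $t\uparrow 1$, $\mo^{s,x}_t\to\{0,1\}$, so $\log\sin(\pi\mo^{s,x}_t)\to-\infty$ pointwise, and simultaneously $-\tfrac12\log(1-t)\to+\infty$; one must show $\E\int_s^1\bigl|\log\frac{\sin(\pi\mo^{s,x}_t)}{\pi\sqrt{1-t}}\bigr|\,dt<\infty$ so that the Fubini interchange and the identity $\E[L_t]=L_s$ may be combined. A clean way is to note that the whole integrand in \eqref{eq:def_v} must be shown integrable anyway for $\vo$ to be well-defined, and to argue via the martingale property of $L$ together with a deterministic lower bound $\Sigma_t-\log\Sigma_t-1\ge 0$ (so the integrand is bounded below), reducing integrability of the absolute value to integrability of the expectation of the integrand itself; the latter is finite because the already-established bound $\E\int_s^1\frac{\sin^2(\pi\mo^{s,x}_t)}{1-t}\,dt<\infty$ controls the $\Sigma_t$ part, and $\E[-\log\Sigma_t] = -2L_s + \log(1-t)$ has a convergent integral against $dt$ on $[s,1]$ since $\int_s^1\log(1-t)\,dt$ is finite. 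Once integrability is secured, everything else is routine.

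Alternatively — and perhaps more robustly — one could verify the formula by checking that the right-hand side, call it $w(s,x)$, satisfies the HJB/verification PDE $\partial_s w + \tfrac12\so^2(s,x)\partial^2_{xx}w + c(s,x,\so^2(s,x)) = 0$ with the correct terminal behavior $w(s,x)\to\tfrac{x-x^2-1}{2}$ as $s\to 1$ (for $x\in(0,1)$) and $w(s,0)=w(s,1)=\tfrac{s-1}{2}$, then apply Itô to $w(t,\mo^{s,x}_t)$ and take expectations, using the martingale terms vanish and the boundary/terminal values match. This is essentially the computation already carried out in Lemma~\ref{lem_log_is_mart} (the operator annihilates $\log\circ\so$), so the PDE check reduces to a short direct differentiation. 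I would likely present the optional-stopping argument as the main proof since it is the most transparent, and it naturally feeds into the verification argument of the next part of Section 5.
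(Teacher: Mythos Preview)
Your proposal is correct and follows essentially the same route as the paper: split the integrand, use $(\mo^{s,x})^2-\langle\mo^{s,x}\rangle$ to evaluate the quadratic-variation part via $\mo^{s,x}_1\sim\text{Bernoulli}(x)$, and use the martingale property of $L_t$ from Lemma~\ref{lem_log_is_mart} for the logarithmic part. Your Fubini discussion also lands on the paper's justification---that $\log(1-t)$ is integrable on $[s,1]$ and $\log\sin(\pi\mo^{s,x}_t)$ has constant sign---so the interchange is legitimate for exactly the reason you indicate.
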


\begin{proof}
	Clearly
	\[\frac{1}{2}\mathbb E\left[\int_s^1 \left \{\frac{\sin^2(\pi\mo^{s,x}_t)}{\pi^2(1-t)}-1\right \} dt  \right ]=\frac{1}{2}\mathbb E[(\mo^{s,x}_1)^2-(\mo^{s,x}_s)^2 +s-1]=\frac{x-x^2-1+s}{2}.  \]
	On the other hand, owing to Lemma \ref{lem_log_is_mart} we have
	\[ \mathbb E\left[\int_s^1  \log\frac{\sin(\pi\mo^{s,x}_t)}{\pi\sqrt{1-t}} dt  \right ]  = \int_s^1 \mathbb E\left[ \log\frac{\sin(\pi\mo^{s,x}_t)}{\pi\sqrt{1-t}}\right ] dt  = (1-s)\log\frac{\sin(\pi x)}{\pi\sqrt{1-s}},\]
	where the exchange between expectation and integral is justified by the fact that $\log(1-t)$ is integrable on $[s,1]$ and $\log (\sin(\pi \mo^{s,x}_t))$ has a constant sign.
	We conclude.
\end{proof}

\begin{lemma}\label{lem:second_var}
$\so^2(t,x)$ is the unique minimizer of the function
$$[0,\infty)\ni\Sigma \mapsto \Sigma \partial^2_{xx}\vo(t,x)+\Sigma -\log\Sigma .$$
\end{lemma}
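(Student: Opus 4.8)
The plan is to reduce the statement to a one-variable calculus exercise: compute $\partial^2_{xx}\vo(t,x)$ from the closed form in Lemma~\ref{lem_formula_v}, and then minimise the map $\Sigma\mapsto \Sigma\,\partial^2_{xx}\vo(t,x)+\Sigma-\log\Sigma$ directly, treating $\partial^2_{xx}\vo(t,x)$ as a constant.

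First I would differentiate the formula
$$\vo(t,x)= \frac{x-x^2-1+t}{2}-(1-t)\log\!\left(\frac{\sin(\pi x)}{\pi\sqrt{1-t}} \right )$$
twice in $x$. Using $\partial_x\log\sin(\pi x)=\pi\cot(\pi x)$ one gets $\partial_x\vo(t,x)=\tfrac{1-2x}{2}-(1-t)\pi\cot(\pi x)$, and then, since $\partial_x\cot(\pi x)=-\pi/\sin^2(\pi x)$,
$$\partial^2_{xx}\vo(t,x)=-1+\frac{(1-t)\pi^2}{\sin^2(\pi x)}=-1+\frac{1}{\so^2(t,x)},$$
where in the last equality we used $\so^2(t,x)=\sin^2(\pi x)/(\pi^2(1-t))$. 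Hence, for $x\in(0,1)$ and $t\in[0,1)$, the coefficient $a:=\partial^2_{xx}\vo(t,x)+1=1/\so^2(t,x)$ is strictly positive.

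Next I would study $g(\Sigma):=\Sigma\,\partial^2_{xx}\vo(t,x)+\Sigma-\log\Sigma=a\Sigma-\log\Sigma$ on $[0,\infty)$, with the convention $g(0)=+\infty$. Since $a>0$, the function $g$ is strictly convex on $(0,\infty)$, tends to $+\infty$ both as $\Sigma\downarrow 0$ and as $\Sigma\to\infty$, and has the single stationary point determined by $g'(\Sigma)=a-1/\Sigma=0$, i.e.\ $\Sigma=1/a=\so^2(t,x)$. Therefore $\so^2(t,x)$ is the unique minimiser of $g$ over $[0,\infty)$, which is the assertion of the lemma.

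There is no real obstacle here: the argument is an elementary computation followed by strictly convex minimisation. The one substantive point is the strict positivity $\partial^2_{xx}\vo(t,x)+1=1/\so^2(t,x)>0$; this is precisely what makes $g$ coercive and strictly convex on $(0,\infty)$, hence what forces existence and uniqueness of an interior minimiser (were this coefficient $\le 0$, the infimum of $g$ over $[0,\infty)$ would be $-\infty$ and no minimiser would exist). That positivity is read off directly from the identity obtained in the second step, so the whole proof reduces to the second-derivative computation.
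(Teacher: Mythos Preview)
Your proof is correct and follows essentially the same approach as the paper: compute $\partial^2_{xx}\vo(t,x)$ from the formula in Lemma~\ref{lem_formula_v}, observe that the function $\Sigma\mapsto \Sigma(\partial^2_{xx}\vo(t,x)+1)-\log\Sigma$ is strictly convex, and identify its unique critical point as $\so^2(t,x)$. The only difference is that you spell out the positivity of $\partial^2_{xx}\vo(t,x)+1$ and the coercivity of $g$ explicitly, whereas the paper leaves this implicit in the phrase ``strictly convex so its unique minimizer can be found by equating its derivative to zero.''
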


\begin{proof} The function in the statement is strictly convex so its unique minimizer can be found by equating its derivative to zero. This means $\partial^2_{xx}\vo(t,x)+1 = 1/ \Sigma$. So we only have to check that 
$$\so^2(t,x):= \frac{\sin^2(\pi x)}{\pi^2(1-t)}= \frac{1}{1+\partial^2_{xx}\vo(t,x)}.$$
But from Lemma \ref{lem_formula_v} we have
$$\partial^2_{xx}\vo(t,x) = \frac{\pi^2(1-t)}{\sin^2(\pi x)} -1,$$
which concludes the proof.
\end{proof}

\begin{lemma}\label{lem_HJB}
	On $(0,1)\times(0,1)$ we have
	\[\partial_t \vo(t,x) +\frac{1}{2}\inf_{\Sigma\geq 0}\, \{\Sigma\, \partial^2_{xx}\vo(t,x)+\Sigma -\log\Sigma-1\}=0. \]
\end{lemma}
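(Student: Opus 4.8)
The plan is to verify the Hamilton–Jacobi–Bellman equation directly by combining the explicit formula for $\vo$ from Lemma~\ref{lem_formula_v} with the pointwise optimization carried out in Lemma~\ref{lem:second_var}. First I would observe that, by Lemma~\ref{lem:second_var}, for fixed $(t,x)\in(0,1)\times(0,1)$ the infimum $\inf_{\Sigma\geq 0}\{\Sigma\,\partial^2_{xx}\vo(t,x)+\Sigma-\log\Sigma-1\}$ is attained precisely at $\Sigma=\so^2(t,x)=\tfrac{\sin^2(\pi x)}{\pi^2(1-t)}$, at which point $1+\partial^2_{xx}\vo(t,x)=1/\so^2(t,x)$, so the bracket evaluates to $\so^2(t,x)\cdot\tfrac{1}{\so^2(t,x)}-\log\so^2(t,x)-1=-\log\so^2(t,x)$. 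Hence the HJB equation reduces to the claim
\[
\partial_t\vo(t,x)=\tfrac{1}{2}\log\so^2(t,x)=\log\frac{\sin(\pi x)}{\pi\sqrt{1-t}}.
\]

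Next I would compute $\partial_t\vo(t,x)$ from the closed-form expression $\vo(t,x)=\tfrac{x-x^2-1+t}{2}-(1-t)\log\!\big(\tfrac{\sin(\pi x)}{\pi\sqrt{1-t}}\big)$. Differentiating the first term in $t$ gives $\tfrac12$. For the second term, write it as $-(1-t)\big[\log\tfrac{\sin(\pi x)}{\pi}-\tfrac12\log(1-t)\big]$; its $t$-derivative is $\log\tfrac{\sin(\pi x)}{\pi}-\tfrac12\log(1-t)-(1-t)\cdot\tfrac{1}{2(1-t)}=\log\tfrac{\sin(\pi x)}{\pi}-\tfrac12\log(1-t)-\tfrac12$. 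Adding the $\tfrac12$ from the first term, the constants cancel and we are left with $\partial_t\vo(t,x)=\log\tfrac{\sin(\pi x)}{\pi}-\tfrac12\log(1-t)=\log\tfrac{\sin(\pi x)}{\pi\sqrt{1-t}}$, which is exactly $\tfrac12\log\so^2(t,x)$. Substituting back into the reduced equation closes the argument.

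This proof is essentially a routine differentiation once the two preceding lemmas are in place, so I do not anticipate a genuine obstacle; the only point requiring mild care is bookkeeping the sign and the $\log(1-t)$ terms when differentiating $(1-t)\log\sqrt{1-t}$, and making sure the pointwise minimum from Lemma~\ref{lem:second_var} is substituted with the correct constant $-1$ already absorbed into the bracket $\Sigma\partial^2_{xx}\vo+\Sigma-\log\Sigma-1$. I would present the computation in two short displayed lines — one for $\partial_t\vo$ and one for the evaluated infimum — and conclude that their sum (with the factor $\tfrac12$) vanishes on $(0,1)\times(0,1)$.
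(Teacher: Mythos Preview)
Your proof is correct. Both your argument and the paper's rely on Lemma~\ref{lem:second_var} to reduce the HJB equation to the identity
\[
\partial_t\vo(t,x)+\tfrac12\so^2(t,x)\,\partial^2_{xx}\vo(t,x)+\tfrac12\{\so^2(t,x)-\log\so^2(t,x)-1\}=0,
\]
but you establish this identity differently. You differentiate the closed form of $\vo$ from Lemma~\ref{lem_formula_v} directly and check the algebra. The paper instead argues probabilistically: by the very definition \eqref{eq:def_v} of $\vo$, the process $t\mapsto \vo(t,\mo^{0,x}_t)+\tfrac12\int_0^t\{\so^2-\log\so^2-1\}\,du$ is a martingale, and since $\mo^{0,x}$ stays in $(0,1)$ (Lemma~\ref{lem:FellerTest}) the vanishing of the drift via It\^o's formula yields the PDE pointwise. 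Your route is more elementary and self-contained; the paper's route is more conceptual (it is the standard dynamic-programming derivation) and does not require redoing any differentiation already implicit in Lemma~\ref{lem_formula_v}. Either is perfectly acceptable here.
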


\begin{proof}
	By \eqref{eq:def_v} we have that
	\[t\mapsto \vo(t,\mo^{0,x})+\frac{1}{2}\int_0^t\{ \so^2(\mo^{0,x}_u) - \log\so^2(\mo^{0,x}_u) -1\}du \]
	is a martingale. Since $\mo^{0,x}$ does not leave $(0,1)$ this means that
	\[\partial \vo(t,z) +\frac{1}{2}\so^2(t,z)\partial_{zz}\vo(t,z) + \frac{1}{2}\{\so^2(t,z)-\log\so^2(t,z)-1\}=0.\]
	But then by  Lemma \ref{lem:second_var} the l.h.s.\ above is equal to $\partial_t \vo(t,x) +\frac{1}{2}\inf_{\Sigma\geq 0}\, \{\Sigma\, \partial^2_{xx}\vo(t,x)+\Sigma -\log\Sigma-1\}$.
\end{proof}

\begin{lemma}\label{lem:various_vs}
	For $s\in[0,1)$ and $x\in(0,1)$ let 
	\[\tilde v(s,x):= \frac{x-x^2-1+s}{2}-(1-s)\log\left(\frac{\sqrt{x(1-x)}}{\sqrt{1-s}} \right ),\]
	and
	\[v(s,x):=\inf\frac{1}{2}\mathbb E\left[\int_s^1 \left \{\sigma_t^2-  \log(\sigma_t^2) -1\right \} dt  \right ],
\]
where the infimum runs over all martingales $M$ on $[s,1]$, starting at $x$, satisfying $dM_t=\sigma dW_t$, and such that $M_1\in\{0,1\}$.

	Then $v(\cdot,\cdot)\geq \tilde v(\cdot,\cdot)$ and there is $\delta\in (0,\infty)$ such that
	\begin{align}
	\label{eq:abschaetzung_v}	
\left |2\tilde v(s,x)-\vo(s,x)\right | \leq \frac{|x-x^2-1+s+(1-s)\log(1-s)|}{2}+\delta\cdot(1-s).
	\end{align}
\end{lemma}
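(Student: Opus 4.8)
The statement consists of two unrelated assertions, which I would establish separately: the inequality $v\ge\tilde v$, and the estimate \eqref{eq:abschaetzung_v}.

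For $v\ge\tilde v$ the plan is to avoid an It\^o/verification argument — awkward here because $\tilde v$ blows up at the spatial boundary — and instead exploit that the win-martingale constraint rigidly prescribes the expected quadratic variation. Let $M$ be any competitor for $v(s,x)$: a continuous martingale on $[s,1]$ with $M_s=x$, $dM_t=\sigma_t\,dW_t$ and $M_1\in\{0,1\}$. Since $M_t=\mathbb E[M_1\mid\mathcal F_t]\in[0,1]$, $M$ is bounded, $M_1^2=M_1$, and therefore
\[ \mathbb E\!\int_s^1\sigma_t^2\,dt=\mathbb E[\langle M\rangle_1]=\mathbb E[M_1^2]-x^2=\mathbb E[M_1]-x^2=x(1-x), \]
the same fixed number for every competitor. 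If the cost of $M$ is infinite there is nothing to prove; otherwise $\mathbb E\int_s^1\sigma_t^2\,dt<\infty$ and we may apply Jensen's inequality for the concave function $\log$ on the probability space $\big(\Omega\times[s,1],\ \mathbb P\otimes\tfrac{dt}{1-s}\big)$, which together with the identity above gives
\[ \mathbb E\!\int_s^1\log\sigma_t^2\,dt\le(1-s)\log\!\Big(\tfrac1{1-s}\,\mathbb E\!\int_s^1\sigma_t^2\,dt\Big)=(1-s)\log\frac{x(1-x)}{1-s}. \]
Substituting into the cost and recalling the definition of $\tilde v$,
\[ \tfrac12\mathbb E\!\int_s^1\{\sigma_t^2-\log\sigma_t^2-1\}\,dt\ge\tfrac12\Big(x(1-x)-(1-s)\log\tfrac{x(1-x)}{1-s}-(1-s)\Big)=\tilde v(s,x), \]
and taking the infimum over competitors yields $v\ge\tilde v$.

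For \eqref{eq:abschaetzung_v} I would expand $2\tilde v(s,x)$ and $\vo(s,x)$ (the latter via Lemma~\ref{lem_formula_v}) and collect terms, obtaining
\[ 2\tilde v(s,x)-\vo(s,x)=\frac{x-x^2-1+s+(1-s)\log(1-s)}{2}+(1-s)\log\frac{\sin(\pi x)}{\pi x(1-x)}. \]
The triangle inequality then reduces the claim to the boundedness of $x\mapsto\big|\log\tfrac{\sin(\pi x)}{\pi x(1-x)}\big|$ on $(0,1)$, and this holds because $x\mapsto\frac{\sin(\pi x)}{\pi x(1-x)}$ is continuous and strictly positive on $(0,1)$ and extends continuously to $[0,1]$ with value $1$ at each endpoint; so one may take $\delta:=\sup_{x\in(0,1)}\big|\log\tfrac{\sin(\pi x)}{\pi x(1-x)}\big|<\infty$.

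I do not anticipate a serious obstacle. The only care needed is routine: one should check that $\sigma_t^2>0$ for $\mathbb P\otimes dt$-a.e.\ $(\omega,t)$ when the cost is finite (automatic, since $\sigma_t^2-\log\sigma_t^2-1\ge0$ forces $\mathbb E\int_s^1(-\log\sigma_t^2)\,dt<\infty$), and that the $L^2$-martingale identity $\mathbb E[\langle M\rangle_1]=\mathbb E[M_1^2]-x^2$ is applicable (it is, since $M$ is bounded). The conceptual content is simply the observation that the rigid terminal constraint $M_1\in\{0,1\}$ fixes $\mathbb E[\langle M\rangle_1]=x(1-x)$, which turns $v\ge\tilde v$ into a one-line Jensen estimate.
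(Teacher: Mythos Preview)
Your proposal is correct and follows essentially the same route as the paper: the inequality $v\ge\tilde v$ is obtained via Jensen for $\log$ together with the observation that the win constraint forces $\mathbb E\int_s^1\sigma_t^2\,dt=x(1-x)$, and the bound \eqref{eq:abschaetzung_v} comes from the exact same algebraic expansion plus boundedness of $\tfrac{\sin(\pi x)}{\pi x(1-x)}$ on $(0,1)$. The only difference is cosmetic---you spell out the integrability checks a bit more carefully---but the argument is the paper's.
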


\begin{proof}
	By Jensen's inequality 
	\[\mathbb E\left[\int_s^1 \left \{\sigma_t^2-  \log(\sigma_t^2) -1\right \} dt  \right ]\geq \mathbb E\left[\int_s^1 \sigma_t^2 dt \right ] -  (1-s)\log\left(\frac{\mathbb E\left[\int_s^1 \sigma_t^2 dt \right ]}{1-s}\right ) - (1-s) .\]
	If $M$ participates in the infimum defining $v(s,x)$, then $M_1\sim B(x)$ and so $\mathbb E\left[\int_s^1 \sigma_t^2 dt \right ]=\mathbb E[M_1^2-M_s^2] = x -x^2=x(1-x)$. This shows that $v\geq \tilde v$. On the other hand 
	\begin{align*}
		2\tilde v(s,x)-\vo(s,x)&= \frac{x-x^2-1+s+(1-s)\log(1-s)}{2}-(1-s)\log\left (\frac{\pi x(1-x)}{\sin(\pi x)} \right ),
	\end{align*}
and we obtain \eqref{eq:abschaetzung_v} by noticing that
\[0<\inf_{x\in[0,1]}\frac{\pi x(1-x)}{\sin(\pi x)}\leq \sup_{x\in[0,1]}\frac{\pi x(1-x)}{\sin(\pi x)} <+\infty. \]
\end{proof}

\begin{lemma}\label{lem_veri}
	Let $M$ be feasible for our minimization problem (started from $x_0$ at time 0), and denote by $\Sigma_t$ the density of its quadratic variation. 
Suppose that $\mathbb E[\int_0^1\log \Sigma_t dt]$ is finite, that $M_t$ does not leave the interval $(0,1)$ until time $t=1$, and that the process $t\mapsto \int_0^t (1-s)\cot(\pi M_s)dM_s$ is a martingale on $[0,1]$. 	Then the process
	$$t\mapsto R_t^M:=\vo(t,M_t)+\frac{1}{2}\int_0^t\{\Sigma_s-\log\Sigma_s-1\}ds
$$
is a submartingale on $[0,1]$, and it is actually a martingale  on $[0,1]$ for $M=\mo^{0,x_0}$ and $\Sigma_t=\so(t,\mo^{0,x_0}_t)$.
\end{lemma}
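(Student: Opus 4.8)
The plan is to apply Itô's formula to $t\mapsto \vo(t,M_t)$ and combine it with the HJB equation from Lemma \ref{lem_HJB}. First I would note that, since $M$ does not leave $(0,1)$ before time $1$, the function $\vo$ is smooth along the trajectory of $M$ on $[0,1)$, so Itô's formula gives
\begin{align*}
\vo(t,M_t) = \vo(0,x_0) + \int_0^t \partial_t\vo(s,M_s)\,ds + \frac12\int_0^t \Sigma_s\,\partial^2_{xx}\vo(s,M_s)\,ds + \int_0^t \partial_x\vo(s,M_s)\,dM_s.
\end{align*}
Using the explicit formula from Lemma \ref{lem_formula_v} one computes $\partial_x\vo(s,x) = \frac{2x-1}{2} - (1-s)\,\pi\cot(\pi x) + \tfrac12(2x-1)$ — more to the point, the stochastic integral term is, up to a bounded-variation piece coming from the polynomial part of $\vo$, a constant multiple of $\int_0^t (1-s)\cot(\pi M_s)\,dM_s$, which is assumed to be a martingale; the remaining $dM_s$ contributions are square-integrable because $M$ is bounded. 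Adding $\frac12\int_0^t\{\Sigma_s-\log\Sigma_s-1\}\,ds$ and invoking Lemma \ref{lem_HJB}, the finite-variation part of $R^M$ equals
\begin{align*}
\int_0^t\Big(\partial_t\vo(s,M_s) + \tfrac12\Sigma_s\,\partial^2_{xx}\vo(s,M_s) + \tfrac12\{\Sigma_s-\log\Sigma_s-1\}\Big)\,ds \;\geq\; 0,
\end{align*}
since for each fixed $(s,M_s)$ the integrand is at least $\partial_t\vo + \frac12\inf_{\Sigma\geq0}\{\Sigma\,\partial^2_{xx}\vo+\Sigma-\log\Sigma-1\}=0$ by Lemma \ref{lem_HJB}, using the hypothesis $\mathbb E[\int_0^1\log\Sigma_s\,ds]$ finite to ensure all integrals make sense. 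Hence $R^M$ is a submartingale on $[0,1)$, and equality holds at every instant precisely when $\Sigma_s$ realizes the infimum, i.e.\ (by Lemma \ref{lem:second_var}) when $\Sigma_s = \so^2(s,M_s)$, which is the case for $M=\mo^{0,x_0}$; there the stochastic integral is genuinely a martingale by Lemma \ref{lem_log_is_mart} (the integrand $\tfrac{\cos(\pi M_u)}{\sqrt{1-u}}$ being square-integrable on $[0,1]$ by the computation preceding Lemma 5.1), so $R^{\mo}$ is a martingale on $[0,1)$.

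The main obstacle is extending the (sub)martingale property from $[0,1)$ to the closed interval $[0,1]$, since $\vo(t,x)$ and its derivatives blow up as $t\to1$. I would handle this by showing $R^M_t$ converges in $L^1$ as $t\to1$ and is uniformly integrable: the estimate \eqref{eq:abschaetzung_v} of Lemma \ref{lem:various_vs} controls $\vo(t,M_t)$ by $2\tilde v(t,M_t)$ up to an error bounded by $\frac{|M_t-M_t^2-1+t+(1-t)\log(1-t)|}{2}+\delta(1-t)\to0$, and on $\{M_1\in\{0,1\}\}$ one has $\tilde v(t,M_t)\to 0$ as well because $(1-t)\log\sqrt{M_t(1-t)}\to0$ when $M_t\to0$ or $1$ at the right rate — this is where the a.s.\ finiteness of $\int_0^1\frac{\sin^2(\pi M_t)}{1-t}\,dt$ (established before Lemma 5.1) is used to rule out $M_t$ approaching the boundary too slowly. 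Combined with $\frac12\int_0^1\{\Sigma_s-\log\Sigma_s-1\}\,ds \in L^1$ (again from the finiteness hypothesis, together with $\mathbb E\int_0^1\Sigma_s\,ds = \mathbb E[M_1^2-x_0^2]<\infty$), Fatou/Vitali then upgrades the submartingale property across $t=1$, with the martingale case following identically once the stochastic integral is seen to close up in $L^1$ at $t=1$.

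A clean way to organize the UI argument: write $R^M_t = \vo(t,M_t) + \frac12\int_0^t\{\Sigma_s-\log\Sigma_s-1\}\,ds$, bound $|\vo(t,M_t)| \leq |\tilde v(t,M_t)| + C(1-t) + C$ via \eqref{eq:abschaetzung_v}, bound $|\tilde v(t,M_t)|$ by a constant plus $(1-t)|\log(1-t)| + (1-t)|\log\sqrt{M_t(1-M_t)}|$, and observe the last term is dominated (uniformly in $t$) by something integrable in $M$ using that $z\mapsto z\,|\log\sin(\pi z)|$ is bounded on $[0,1]$ after the change of variables absorbing the $(1-t)$ — alternatively just note $(1-t)\log\sqrt{M_t(1-M_t)} \geq (1-t)\log\big(\tfrac{\sin(\pi M_t)}{\pi}\cdot c\big) = \tfrac12\big(\text{the martingale }L_t\big)\cdot(1-t)\cdot 2 + (1-t)\log(c\sqrt{1-t})$ and use $\sup_t \mathbb E[(1-t)|L_t|]<\infty$ since $L$ is an $L^1$-bounded martingale on $[0,1)$. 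Finally, $R^M$ being a submartingale on $[0,1)$ that is UI on $[0,1]$ is a submartingale on $[0,1]$, and the martingale statement for $\mo^{0,x_0}$ follows since there the inequality in the drift is an equality and the closing-up argument applies verbatim.
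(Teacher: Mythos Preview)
Your core argument—It\^o on $\vo(t,M_t)$, use Lemma~\ref{lem_HJB} to see the drift is nonnegative (with equality for $\mo$ by Lemma~\ref{lem:second_var}), and invoke the hypothesis on the cotangent integral to upgrade from local to true (sub)martingale—is exactly the paper's route. A couple of slips: your displayed formula for $\partial_x\vo$ is garbled (the correct expression is $\frac{1-2x}{2}-(1-s)\pi\cot(\pi x)$, and integrating the polynomial part against $dM$ gives a martingale, not a ``bounded-variation piece''); and for the $\mo$ case you cite Lemma~\ref{lem_log_is_mart} and the integrand $\frac{\cos(\pi M_u)}{\sqrt{1-u}}$, but that is the integrand appearing in $L_t=\log\so$, not in $\int\partial_x\vo\,d\mo$, and it is \emph{not} square-integrable on $[0,1]$ (since $\cos(\pi\mo_u)\to\pm 1$). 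The paper instead writes the correct $dB_s$-integrand $\frac{(1-2\mo_s)\sin(\pi\mo_s)}{2\pi\sqrt{1-s}}-\sqrt{1-s}\cos(\pi\mo_s)$ and asserts this yields a martingale (indeed it is square-integrable over $[0,1]$ because $\mathbb E\int_0^1\frac{\sin^2(\pi\mo_s)}{\pi^2(1-s)}ds=\mathbb E[\langle\mo\rangle_1]<\infty$).

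Your substantial addition is the attempt to close the (sub)martingale at $t=1$. The paper's proof does not address this; it stops after recording the It\^o decomposition, so you are being more careful than the paper here. However, your UI argument for general $M$ has a genuine gap: you invoke the a.s.\ finiteness of $\int_0^1\frac{\sin^2(\pi M_t)}{1-t}\,dt$ and the martingale $L_t$, but both facts were established only for $\mo^{0,x_0}$, not for an arbitrary feasible $M$; for general $M$ there is no a priori control on the rate at which $M_t$ approaches $\{0,1\}$, so your mechanism for $(1-t)\log\sqrt{M_t(1-M_t)}\to 0$ breaks down. In the paper's overall verification this endpoint issue is effectively sidestepped because Lemma~\ref{lem_veri} is only applied to $\mo$ and to the concatenations $M^\epsilon$, which coincide with $\mo$ near $t=1$.
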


\begin{proof}
	By Lemma \ref{lem_HJB} we have
	\[\partial_t \vo(t,M_t)+ \frac{1}{2} \{\Sigma_t \partial^2_{xx}\vo(t,M_t)+\Sigma_t -\log\Sigma_t-1\} \geq 0,\]
	from which, thanks to Ito formula, the local submartingale property of $R^M$ follows. To check that this process is an actual submartingale, we need to establish that 
	 $\int_0^t \partial_x\vo(s,M_s)dM_s$ is a martingale. But
	 \[\int_0^t \partial_x\vo(s,M_s)dM_s=\int_0^t\left\{\frac{1-2M_s}{2}-\pi(1-s)\cot(\pi M_s)  \right\}dM_s,\]
	 which by assumption is indeed a martingale.
	 
	On the other hand, for $M=\mo^{0,x_0}$ and $\Sigma_t=\so(t,\mo^{0,x_0}_t)$ we have 
	\[\partial_t \vo(t,\mo^{0,x_0}_t)+ \frac{1}{2} \{\so(\mo^{0,x_0}_t)^2 \partial^2_{xx}\vo(t,\mo^{s,x_0}_t)+\so(\mo^{0,x_0}_t)^2 -\log\so(\mo^{0,x_0}_t)^2-1\} = 0,\]
	so by the same token the process $R^{\mo}$ is a local martingale.
	This time around
	 \[\int_0^t \partial_x\vo(s,\mo^{0,x_0}_s)d\mo^{0,x_0}_s=\int_0^t\left\{\frac{1-2\mo^{0,x_0}_s}{2}-\pi(1-s)\cot(\pi \mo^{0,x_0}_s)  \right\}\frac{\sin(\pi \mo^{0,x_0}_s)}{\pi\sqrt{1-s}}dB_s,\]
	 which is a martingale. We conclude that $R^{\mo}$ is a martingale.
\end{proof}

We can now carry on the verification argument, showing the optimality of $\mo$:

\begin{proof}
If $M$ fulfils the assumptions in Lemma	 \ref{lem_veri},
	then we have
	\begin{align*}
	\mathbb E\left [
\frac{1}{2}\int_0^1\{\Sigma_s-\log\Sigma_s-1\}ds	\right ] &=
	\mathbb E\left [
\vo(1,M_1)+\frac{1}{2}\int_0^1\{\Sigma_s-\log\Sigma_s-1\}ds	\right ]\\
&\geq 	\vo(0,x_0) \\
&= 	\mathbb E\left [
\vo(1,\mo^{0,x_0}_1)+\frac{1}{2}\int_0^1\{\so(\mo^{0,x_0}_s)^2-\log\so(\mo^{0,x_0}_s)^2-1\}ds	\right ]\\
&= \mathbb E\left [
\frac{1}{2}\int_0^1\{\so(\mo^{0,x_0}_s)^2-\log\so(\mo^{0,x_0}_s)^2-1\}ds	\right ],
	\end{align*}
showing that $\mo^{0,x_0}$ is better than $M$ (i.e.\ $M$ is sub-optimal compared to $\mo^{0,x_0}$). 

Let now $M$ be feasible and wlog such that $\mathbb E[\int_0^1\log \Sigma_t dt]$ is finite. For $\epsilon$ small we define $\tau^\epsilon:=\inf \{t:M_t\leq \epsilon \text{ or }M_t\geq 1-\epsilon\}$. We then define the martingale $M^\epsilon$ as follows: for $t\leq t^\epsilon:=\min\{1-\epsilon,\tau^{\epsilon}\}$ we set $M^\epsilon_t:=M_{t}$, whereas for $t\in(t^{\epsilon},1]$ we set $M^\epsilon_t:= \mo^{t^{\epsilon},M_{t^\epsilon}}_t$. Hence $M^\epsilon$ is just the continuous pasting (concatenation) of $M$ and $\mo$ at time $t^\epsilon$. One easily checks that $M^\epsilon$ fulfils the assumptions in Lemma \ref{lem_veri}, for all $\epsilon>0$. If $\Sigma^\epsilon$ is the density of the quadratic variation of $M^\epsilon$, then
\begin{align*}
&\frac{1}{2}\mathbb E\left [\int_0^1\{\Sigma^\epsilon_t - \log\Sigma^\epsilon_t -1\} dt\right ]	\\ = & 
\frac{1}{2}\mathbb E\left [\int_0^{t^\epsilon}\{\Sigma_t - \log\Sigma_t -1\} dt\right ] + \frac{1}{2}\mathbb E\left [\int^1_{t^\epsilon}\{\so^2(t,\mo_t^{t^\epsilon,M_{t^\epsilon} }) - \log\so^2(t,\mo_t^{t^{\epsilon},M_{t^\epsilon}}) -1\} dt \right ]\\
=& \frac{1}{2}\mathbb E\left [\int_0^{t^\epsilon}\{\Sigma_t - \log\Sigma_t -1\} dt\right ] + \mathbb E[\vo(t^{\epsilon}, M_{t^\epsilon})].
\end{align*}
For the time being let us assume that
\begin{align}
\label{eq:error_to_zero}
	\limsup_{\epsilon\searrow 0}\mathbb E[\vo(t^{\epsilon}, M_{t^\epsilon})]=0.
\end{align}
Then, since $t^\epsilon\nearrow 1$ as $\epsilon\searrow 0$, this would entail
\[\limsup_{\epsilon \to 0} \frac{1}{2}\mathbb E\left [\int_0^1\{\Sigma^\epsilon_t - \log\Sigma^\epsilon_t -1\} dt\right ] \leq \frac{1}{2}\mathbb E\left [\int_0^{1}\{\Sigma_t - \log\Sigma_t -1\} dt\right ],\]
so that $M$ is sub-optimal compared to  $\mo^{0,x_0}$. Hence to finish the proof we have to establish \eqref{eq:error_to_zero}. For this purpose, owing to \eqref{eq:abschaetzung_v}, it is enough to check that
\begin{align} \label{eq:error_to_zero2}
	\limsup_{\epsilon\searrow 0}\mathbb E[\tilde v(t^{\epsilon}, M_{t^\epsilon})]\leq 0,
\end{align}
with $\tilde v$ defined in Lemma \ref{lem:various_vs}. Taking $v$ from that same lemma, we find
\begin{align*}
&\frac{1}{2}\mathbb E\left [\int_0^{1}\{\Sigma_t - \log\Sigma_t -1\} dt\right ] \\
=& \frac{1}{2}\mathbb E\left [\int_0^{t^\epsilon}\{\Sigma_t - \log\Sigma_t -1\} dt\right ] +\frac{1}{2}\mathbb E\left [\int_{t^\epsilon}^1\{\Sigma_t - \log\Sigma_t -1\} dt\right ] 	
\\ \geq & \frac{1}{2}\mathbb E\left [\int_0^{t^\epsilon}\{\Sigma_t - \log\Sigma_t -1\} dt\right ] + \mathbb E[v(t^\epsilon,M_{t^\epsilon})] \\
\geq & \frac{1}{2}\mathbb E\left [\int_0^{t^\epsilon}\{\Sigma_t - \log\Sigma_t -1\} dt\right ] + \mathbb E[\tilde v(t^\epsilon,M_{t^\epsilon})],
\end{align*}
as $v\geq \tilde v$. Since $\mathbb E[\int_0^1\log \Sigma_t dt]$ is finite, we can derive \eqref{eq:error_to_zero2} from this.
\end{proof}

We close this part remarking on the uniqueness of optimizers to Problem \eqref{eq:max_ent_prob}: As the previous proofs show, the only way for $\Sigma$ to be optimal is by making \[\partial_t \vo(t,M_t)+ \frac{1}{2} \{\Sigma_t \partial^2_{xx}\vo(t,M_t)+\Sigma_t -\log\Sigma_t-1\} \]
be equal to zero.  By Lemma \ref{lem:second_var} this is only achieved by the Aldous martingale.

\bibliographystyle{abbrv}
\bibliography{../MBjointbib/joint_biblio}

\begin{thebibliography}{10}

\bibitem{Al22b}
D.~Aldous.
\newblock {\em notes}, {\tt
  https://www.stat.berkeley.edu/~aldous/Research/OP/ent-MG.pdf}.

\bibitem{Al22a}
D.~Aldous.
\newblock What is the max-entropy win-probability martingale?
\newblock {\tt
  https://www.stat.berkeley.edu/~aldous/Research/OP/maxentmg.html}.

\bibitem{Al13}
D.~Aldous.
\newblock Using prediction market data to illustrate undergraduate probability.
\newblock {\em The American Mathematical Monthly}, 120(7):pp. 583--593, 2013.

\bibitem{Av01}
M.~Avellaneda.
\newblock Calibrating volatility surfaces via relative-entropy minimization.
\newblock {\em Quantitative Finance}, 1(1):42--51, 2001.

\bibitem{BaBeHuKa20}
J.~Backhoff-Veraguas, M.~Beiglb\"{o}ck, M.~Huesmann, and S.~K\"{a}llblad.
\newblock Martingale {B}enamou-{B}renier: a probabilistic perspective.
\newblock {\em Ann. Probab.}, 48(5):2258--2289, 2020.

\bibitem{BaBeScTs23}
J.~{Backhoff Veraguas}, M.~{Beiglb\"ock}, W.~{Schachermayer}, and
  B.~{Tschiderer}.
\newblock The structure of martingale benamou--brenier in multiple dimensions.
\newblock {\em ArXiv e-prints}, 2023.

\bibitem{BaUn22}
J.~Backhoff-Veraguas and C.~Unterberger.
\newblock On the specific relative entropy between martingale diffusions on the
  line.
\newblock 2022.

\bibitem{Ba83}
R.~Bass.
\newblock Skorokhod embedding via stochastic integrals.
\newblock In J.~Az{\'e}ma and M.~Yor, editors, {\em S{\'e}minaire de
  {Probabilit{\'e}s} {XVII} 1981/82}, number 986 in Lecture {Notes} in
  {Mathematics}, pages 221--224. Springer, 1983.

\bibitem{BeCoHu14}
M.~{Beiglb{\"o}ck}, A.~Cox, and M.~Huesmann.
\newblock Optimal transport and {S}korokhod embedding.
\newblock {\em Invent. Math.}, 208(2):327--400, 2017.

\bibitem{BeHePe12}
M.~Beiglb{\"o}ck, P.~{Henry-Labord{\`e}re}, and F.~Penkner.
\newblock Model-independent bounds for option prices: A mass transport
  approach.
\newblock {\em Finance Stoch.}, 17(3):477--501, 2013.

\bibitem{BeJu21}
M.~Beiglb{\"o}ck and N.~Juillet.
\newblock Shadow couplings.
\newblock {\em Trans. Amer. Math. Soc., to appear}, 2021.

\bibitem{BeNuSt19}
M.~Beiglb\"{o}ck, M.~Nutz, and F.~Stebegg.
\newblock Fine properties of the optimal {S}korokhod embedding problem.
\newblock {\em J. Eur. Math. Soc. (JEMS)}, 24(4):1389--1429, 2022.

\bibitem{BeNuTo16}
M.~Beiglb\"{o}ck, M.~Nutz, and N.~Touzi.
\newblock Complete duality for martingale optimal transport on the line.
\newblock {\em Ann. Probab.}, 45(5):3038--3074, 2017.

\bibitem{BoNu13}
B.~Bouchard and M.~Nutz.
\newblock Arbitrage and duality in nondominated discrete-time models.
\newblock {\em The Annals of Applied Probability}, 25(2):823--859, 2015.

\bibitem{CoDo22}
A.~Cohen and Y.~Dolinsky.
\newblock A scaling limit for utility indifference prices in the discretised
  bachelier model.
\newblock {\em Finance and Stochastics}, 26(2):335--358, 2022.

\bibitem{FiGl21}
A.~Figalli and F.~Glaudo.
\newblock {\em An invitation to optimal transport, {W}asserstein distances, and
  gradient flows}.
\newblock EMS Textbooks in Mathematics. EMS Press, Berlin, [2021] \copyright
  2021.

\bibitem{Fo22b}
H.~F\"{o}llmer.
\newblock Doob decomposition, {D}irichlet processes, and entropies on {W}iener
  space.
\newblock In {\em Dirichlet forms and related topics}, volume 394 of {\em
  Springer Proc. Math. Stat.}, pages 119--141. Springer, Singapore, [2022]
  \copyright 2022.

\bibitem{Fo22a}
H.~F\"{o}llmer.
\newblock Optimal couplings on {W}iener space and an extension of {T}alagrand's
  transport inequality.
\newblock In {\em Stochastic analysis, filtering, and stochastic optimization},
  pages 147--175. Springer, Cham, [2022] \copyright 2022.

\bibitem{GaHeTo13}
A.~Galichon, P.~{Henry-Labord{\`e}re}, and N.~Touzi.
\newblock A stochastic control approach to no-arbitrage bounds given marginals,
  with an application to lookback options.
\newblock {\em Ann. Appl. Probab.}, 24(1):312--336, 2014.

\bibitem{Ga91}
N.~Gantert.
\newblock {\em Einige grosse {A}bweichungen der {B}rownschen {B}ewegung},
  volume 224 of {\em Bonner Mathematische Schriften [Bonn Mathematical
  Publications]}.
\newblock Universit\"{a}t Bonn, Mathematisches Institut, Bonn, 1991.
\newblock Dissertation, Rheinische Friedrich-Wilhelms-Universit\"{a}t Bonn,
  Bonn, 1991.

\bibitem{GuPoRe23}
G.~Guo, D.~Possama{\"\i}, and C.~Reisinger.
\newblock Randomness and early termination: what makes a game exciting?, 2023.

\bibitem{HoNe12}
D.~Hobson and A.~Neuberger.
\newblock Robust bounds for forward start options.
\newblock {\em Math. Finance}, 22(1):31--56, 2012.

\bibitem{KaSh12}
I.~Karatzas and S.~Shreve.
\newblock {\em Brownian motion and stochastic calculus}, volume 113.
\newblock Springer Science \& Business Media, 2012.

\bibitem{Sa15}
F.~Santambrogio.
\newblock {\em Optimal transport for applied mathematicians}, volume~87 of {\em
  Progress in Nonlinear Differential Equations and their Applications}.
\newblock Birkh\"auser/Springer, Cham, 2015.
\newblock Calculus of variations, PDEs, and modeling.

\bibitem{Vi03}
C.~Villani.
\newblock {\em Topics in optimal transportation}, volume~58 of {\em Graduate
  Studies in Mathematics}.
\newblock American Mathematical Society, Providence, RI, 2003.

\bibitem{Vi09}
C.~Villani.
\newblock {\em Optimal Transport. Old and New}, volume 338 of {\em Grundlehren
  der mathematischen Wissenschaften}.
\newblock Springer, 2009.

\end{thebibliography}

\end{document}